\numberwithin{equation}{section}
\newtheorem{theorem}{Theorem}
\newtheorem{def.}{Definition}
\newtheorem{prop.}[theorem]{Proposition}
\newtheorem{lem.}[theorem]{Lemma}
\newtheorem{cor.}[theorem]{Corollary}
\newtheorem{conj.}[theorem]{Conjecture}
\newtheorem{exmp}{Example}
\newtheorem{rem.}[theorem]{Remark}
\newtheorem{result}{Result}
\newcommand{\norm}[2]{
\left\| #2 \right\|_{#1}
}
\newcommand{\ip}[2]{ \langle {#1} ,{#2}  \rangle}
\newcommand{\diag}[1]{
{\mathrm{diag}} \left( #1 \right)
}
\newcommand{\MM}{\mathit{M}}
\newcommand{\HHmp}{{\mathcal H}_{m}^p}
\newcommand{\Mat}[1]{
{M}^{\left( #1 \right)}
}
\newcommand{\Matt}[2]{
{M}^{\left( #1 \right)} \left( #2 \right) 
}
\newcommand\Mate{M}
\newcommand\Ope{O}
\newcommand{\Opp}[2]{
{\Ope}^{\left( #1 \right)}\left( #2 \right) }
\def\mm{\mathit{m}}
\newcommand{\Op}[1]{
{\Ope}^{\left( #1 \right)}}
\newcommand{\kernel}[1]{\mathrm{ker}\left( #1 \right)}
\newcommand{\identity}[1]{\mathsf{id}_{ #1 }}
\newcommand{\range}[1]{\mathsf{ran}\left( #1 \right)} 
\newcommand{\isdef}{\mathrel{\mathrel{\mathop:}=}}
\def\TT{\mathbb{T}}
\def\cF{{\mathcal F}}
\def\Hil{\mathcal{H}}
\def\cA{{\mathcal A}}
\def\cS{{\mathcal S}}
\def\cB{{\mathcal B}}
\def\HH{\mathbb{H}}
\newcommand{\BL}[1]{
{\cB} \left( #1 \right)
}
\def\inv{{^{-1}}}
\def\DPsi{{\widetilde \Psi}}
\def\DPhi{{\widetilde \Phi}}
\newcommand{\Mpm}[0]{
M_m^{p} (\rd)
}
\newcommand{\Lpm}[0]{
{L^{p}_m (\rd)}
}
\def\LtRd{L^2\left(\rd \right)}
\def\zd{\bZ^d}
\newcommand{\hpm}{\Hil^p_{m}}
\newcommand{\lpm}{\ell^p_{m}}
\newcommand{\fatm}{m}
\def\Mmpq{M_{m}^{p,q}}
\newcommand{\field}[1]{\mathbb{#1}}
\newcommand{\bC}{\field{C}}
\newcommand{\bR}{\field{R}}
\newcommand{\bZ}{\field{Z}}
\def\zd{{\bZ^d}}
\def\rdd{{\bR^{2d}}}
\def\rd{{\bR^{d}}}
\def\intrdd{\int_{\rdd}}
\newcommand{\fif}{if and only if}
\newcommand{\psdo}{pseudodifferential operator}
\newcommand{\tfa}{time-frequency analysis}
\newcommand{\stft}{short-time Fourier transform}
\newcommand{\modsp}{modulation space}
\newcommand{\tf}{time-frequency}
\newcommand{\tfs}{time-frequency shift}
\title{The Lifting Property for Frame Multipliers and 
  Toeplitz Operators}
\author{Peter Balazs }
\address{Acoustics Research Institute \\
Austrian Academy of Sciences \\
Dominikanerbastei 16 \\
A-1010 Vienna, Austia}
\email{peter.balazs@oeaw.ac.at} 
\address{Acoustics, Analysis and AI \\
 Interdisciplinary Transformation University Austria \\
Altenberger Stra{\ss}e 66c \\
Science Park 4, OG 2 \\
A-4040 Linz, Austria}
	\author{Karlheinz Gr\"ochenig}
\address{Faculty of Mathematics \\
University of Vienna \\
Oskar-Morgenstern-Platz 1 \\
A-1090 Vienna, Austria}
\email{karlheinz.groechenig@univie.ac.at}
\subjclass[2020]{47A60,42C15}
\date{}
\keywords{Localized frame, spectral invariance, matrix algebra,
  coorbit space, frame multiplier, Toeplitz operator}
\thanks{This research was funded in whole or in part by the Austrian
  Science Fund (FWF) LOFT - Grant-DOI 10.55776/P34624 . For open access purposes, the
  author has applied a CC BY public copyright license to any author
  accepted manuscript version arising from this submission.}
\begin{document}

\begin{abstract}
Frame multipliers are an abstract version of Toeplitz operators in
frame theory and consist of a composition of a multiplication operator
with the analysis and synthesis operators. Whereas the boundedness
properties of frame multipliers on Banach spaces associated to a
frame, so-called coorbit spaces, 
are well understood, their invertibility is much more difficult. We
show that frame multipliers with a positive symbol are Banach space
isomorphisms between the corresponding coorbit spaces. The results
resemble the lifting theorems in the theory of Besov spaces and
modulation spaces. Indeed, the application of the abstract lifting
theorem to Gabor frames yields a new lifting theorem between
modulation spaces. A second application to Fock spaces yields
isomorphisms between weighted Fock spaces.

The main techniques are the theory of localized frames and existence
of inverse-closed matrix algebras. 
\end{abstract} 

\maketitle

\section{Introduction}

A lifting theorem establishes an isomorphism between spaces of
functions, distributions or sequences and their weighted
versions. The basic example is the multiplication operator with a
multiplier sequence $(\mu _k)$, given by 
\begin{equation}
  \label{eq:3}
  c = (c_k)_{k\in K} \mapsto  (\mu _k  c_k )_{k\in K}\, .
\end{equation}
It  maps  the weighted
$\ell^p$-space  $\ell
^p_{\mu }(K)$ with norm $\|c\|_{\ell ^p_\mu} = \Big( \sum
_{k\in K} |c_k|^p \mu _k^p\Big)^{1/p}$ to the unweighted $\ell ^p(K)$ over the index set
$K$. The multiplication operator establishes  an isomorphism
between those spaces and is called
a lifting operator. By means of a lifting
operator one may often reduce the study of
    functions or operators to the unweighted case. 

Although this example is not more than a definition, it becomes
interesting 
when a function space is described by the behavior of its
coefficients with respect to a basis. For  example, the usual Sobolev space on
the torus   is characterized by its Fourier
coefficients as  $\mathbb{H}^s(\TT ) = \{f \in L^2(\TT ): \sum _{k\in \bZ }
|\hat{f}(k)|^2 (1+4\pi ^2|k|^2)^s < \infty \}$  \cite{kat68}.  Let  $\mu _k
= (1+4\pi ^2|k|^2)^{\nu/2}$ be a polynomial weight function, then the operator
$$
M_\mu f (x) =  \sum _{k\in \bZ } \mu _k \hat{f}(k) e^{ i k x} =
(\mathrm{I} - \frac{d^2}{dx^2})^{\nu /2}f(x) 
$$
is a bijection between the Sobolev
spaces $\HH ^s(\TT) $ and $\HH ^{s-\nu }(\TT )$. 

For  the Sobolev space $\mathbb{H}^s(\TT)$ the underlying
basis is the Fourier basis, and a function in $\mathbb{H}^s(\TT )$  is characterized by the size
of its coefficients with respect to this basis. The
lifting operator can be described as a multiplication  operator  on the
coefficients, i.e., as a 
Fourier  multiplier, or directly as a  
a fractional power of the
derivative.  Similar
phenomena can be observed for Besov spaces and \modsp s (see below).

The situation becomes more subtle when a function space is no  longer
described with respect to a basis, but with respect to a frame, which
is a form of \emph{overcomplete} system with stable series
expansions. To illustrate  this substantial difference, assume first that
$\Psi = \{\psi _k: k\in K\} $ is an orthonormal basis. Then the mapping $f\mapsto
  \langle f, \psi_k\rangle _{k\in K}$ is a bijection between the
  spaces $\Hil$ and $\ell^2 (K)$. Thus for bounded symbols $\mu  =
  \left( \mu _k \right)_{k \in K}$
   the sequence $\mu_k \langle f, \psi_k\rangle , k\in K,$ is always in  the
  range of the analysis operator defined by the sequence $\left(
C_{\Psi} f \right)_k = \left< f, \psi_k\right>
$. Consequently, there is always 
  $h\in \Hil $, such that $\mu _k \langle f, \psi_k\rangle = \langle
  h,\psi _k \rangle $. By contrast, if
  $\Psi$ is a redundant frame, then $\mathrm{ran}\, C_{\Psi
    } \neq \ell^2(K) $ and  in general $(\mu _k \langle f, \psi_k\rangle) \not \in \mathrm{ran}\, C_{\Psi
    } $.   In this case  it is still meaningful, but more intricate,
to describe a space by the size of its coefficients with respect to
the given frame $\Psi$. The additional
difficulties introduced by the redundancy can also be seen by the fact that the (synthesis) operator $c \mapsto  \sum c_k \psi _k$
possesses a non-trivial kernel and coefficients in the series expansion are no longer
unique. For the description of associated spaces one therefore uses a
special set of coefficients, namely those with respect to the dual
frame $\{\tilde{\psi _k}: k\in K\}$, so that every $f$ possesses a (non-unique)
series expansion
  $f = \sum _{k\in K} \langle f, \tilde{\psi _k}\rangle \, \psi _k$.
Whereas initially such expansions hold only in a Hilbert space, it is
often  possible to define  associated spaces - $\hpm$ - by imposing a
weighted $\ell ^p$-norm
on these canonical coefficients via
\begin{equation}
  \label{eq:c2}
  \|f\|_{\hpm } = (\big( \sum _{k\in K} |\langle f, \tilde{\psi _k}\rangle |^p m _k^p \Big) ^{1/p}
  \, . 
\end{equation}
It is not completely obvious how and when this is a meaningful
definition. The decisive assumption is that the frame is localized,
and the theory of these spaces is treated in the context  of
localized frames~\cite{forngroech1}. We will provide the necessary
background in Section~\ref{sec:locfram0}.

It is now tempting to follow the example of Sobolev spaces and
consider a map
\begin{equation}
  \label{eq:c3}
  \MM_{\mu, \Psi} f = \sum _{k\in K} \mu _k \, \langle f,
  \tilde{\psi _k}\rangle \, \psi _k \, .
\end{equation}
Once $\hpm $ is well-defined, it is easy to see that 
$\MM_{\mu , \Psi}$ is
bounded between $\Hil ^p _{ \fatm \mu } $  and $\Hil ^p _{\fatm}$, but it
is unclear whether 
$\MM_{\mu , \Psi}$  is indeed an isomorphism as in the case
of a basis. As stated above the difficulty lies in the fact that the
multiplied coefficient sequence $\mu _k \, \langle f, \tilde{\psi
  _k}\rangle $ need not be of the form $\langle h, \tilde{\psi
  _k}\rangle$ and  might even belong to the kernel of the synthesis
operator. An understanding of this phenomenon has been a major
challenge for us, and  this paper is entirely devoted to obtain a
deeper insight into the isomorphism property of $\MM_{\mu , \Psi}$.

Operators of the form~\eqref{eq:c3} are of interest in
their own right and have been investigated both as abstract
\emph{frame multipliers} \cite{xxlmult1} and for special frames, e.g., under the name
Gabor multipliers~\cite{feinow1}. Especially Gabor multipliers
have numerous  applications, e.g., as  quantization operators in physics \cite{aliant1} and for  time-variant filtering
\cite{
hlawatgabfilt1} in signal processing. They are
applied in adaptive sound analysis \cite{marcoxxl11}, noise reduction
\cite{Boelcskei01noisereduction}, psychoacoustic modeling
\cite{xxllabmask1},  sound morphing
\cite{olivtor10} and synthesis \cite{DepKronTor07}. 

The  perturbation of invertible frame
multipliers is studied e.g. in ~\cite{balsto09new,uncconv2011}, but it remains difficult to show
that a given frame multiplier is invertible. Our investigation offers
some answers to this question.

Our main theorem asserts that a frame multiplier with a positive
 multiplier sequence 
${\mu} $ is a Banach space isomorphism between the
Banach  spaces associated to the frame. To offer  a flavor of our main
result,  we state a
provocatively inexact version  of such a  result. 
\begin{result} \label{main} {\it
Under suitable conditions on the frame ${\Psi}$ and the  weights
$m$ and $\mu$, the frame multiplier
$\MM_{\mu, \Psi}$  is a Banach space isomorphism between
$\Hil_{m \sqrt{\mu}}^p $ and
$\Hil^p_{m/\sqrt{\mu}}$,  and for some  constants $m,M>0$
one has the norm equivalence 
  $$
m \|f\|_{\Hil_{m \sqrt{\mu}}^p} \leq   \|\MM_{\mu, \Psi}
f\|_{\Hil^p_{m/\sqrt{\mu}}} \leq M \|f\|_{\Hil_{m \sqrt{\mu}}^p} \, . 
  $$ }
\end{result}
The result as stated is, of course, very imprecise. Our  main
contribution is to identify appropriate abstract conditions on the
frame 
${\Psi}$ so that  all symbols in the envisioned theorem are
meaningful and  the results holds rigorously under precisely stated
assumptions. This will be done after some preliminary work in
Section~4, Theorem~\ref{th:multrecip}.    

By and large, 
the proof is motivated by the proof of the lifting theorems for
\modsp s  in \cite{grto10}.  Many analytic arguments there 
can be 
translated into the language of abstract 
frame theory. 
The additional ideas comes from the theory of 
\emph{localized  frames}   and the  \emph{spectral invariance} of
matrix algebras and will be presented in Sections~2 and~3. 

\vspace{3mm}

\textbf{Background.} 
 In functional analysis, lifting theorems are an important feature in
 the theory of function spaces. For instance, the family of Besov
 spaces \cite{triebel83} is 
a three-parameter family $B^{p,q}_s(\rd )$, where $0<p,q < \infty
$ refer to $L^p$-conditions and $s\in \bR $ is a smoothness
parameter. The lifting theorem for Besov spaces states that
$$
D^n B^{p,q}_s(\bR  ) = B^{p,q}_{s-n}(\bR  )    \, ,
  $$
  with  $D=\frac{d}{dx}$ being the usual derivative.
  In simple terms this means that by taking $n$ derivatives, one
  looses $n$ orders of smoothness. However, the lifting theorem says
  more: the derivative $D^n$ is a Banach space isomorphism from  $
  B^{p,q}_s(\bR  )$ onto $ B^{p,q}_{s  - n }(\bR  )$. See~\cite{triebel83} for
  the precise definition  of Besov spaces and their basic theory, in
  particular Thm.~2.3.8. 
  for the lifting theorem.

 A similar lifting theorem occurs in the theory of the original
  version of \modsp s
  $ M^{p,q}_s(\rd  )$ in~\cite{fe06}, where again $s$ is a
  smoothness parameter.  See Section \ref{sec:Gabmult1} for details on
  modulation spaces and their lifting theorems.
  The lifting theorem states again that $D^n
  M^{p,q}_s(\rd  ) = M^{p,q}_{s-n}(\rd  )   $. The  lifting theorems
    for both  Besov spaces and \modsp s involve arguments with Fourier
    multipliers. We will discuss more advanced lifting theorems for
    \modsp s in Section~\ref{sec:Gabmult1}. 

    \vspace{3mm}
    
    \textbf{Toeplitz operators.}
In complex analysis,    the   analogy to frame multipliers is the
class of Toeplitz operators \cite{zh12}. If 
  $f$ is holomorphic, then ${\mu} f$ for some multiplier ${\mu} $ need not
  be holomorphic. To map back into a space of holomorphic functions,
  one uses a suitable projection $P$ (a Bergman or Szeg\"o
  projection) \cite{zh12}, and the resulting operator is then the
  Toeplitz operator $M_\mu =
  P(\mu f)$.
  In frame theory, analyticity corresponds to the properties of the
  frame coefficients $C_{\Psi }f = \langle f, \psi _k\rangle _{k\in
    K}$. Unless $\Psi $ is a Riesz basis, the range $\mathrm{ran}\,
  C_{\Psi }$ is a proper, closed  subspace of $\ell ^2(K)$.  
  After the 
  multiplication with a symbol $\mu $ the resulting sequence $(\mu
  _k \langle f, \psi _k\rangle )_{k\in K}$ is in general no longer in
  that subspace and needs to be projected back to $\mathrm{ran}\,
  C_{\Psi }$. By identifying $\Hil $ with $\mathrm{ran}\,
  C_{\Psi }$ via  the synthesis
  operator $c \mapsto  \sum _{k\in K} c_k \psi _k$, we then obtain
  precisely the 
operator $M_\mu f = \sum _{k\in K} \mu
  _k \langle f, \psi _k \rangle \psi _k$, in other words, a frame
  multiplier. 

To see  that this is not a mere analogy, we will discuss the
explicit example of Fock spaces. The Fock space $\cF ^p_m$ with a
positive weight $m$  consists of all entire functions satisfying
$ \int _{\bC } |F(z)|^p m(z)^p e^{-\pi p
  |z|^2/2} \, dz <\infty $. By using the explicit formula for the
projection onto Fock space, a Toeplitz operator with multiplier
function $\mu $ is defined by
$$
T_\mu F (z)  = \int _{\bC } \mu (w) F(w) e^{\pi \bar{w}z} e^{-\pi
  |w|^2/2} \, dw \, .
$$
Whereas usually $\mu $ is a function, it has been known for a long
time that one may even take rough distributions for the definition of
Toeplitz operators~\cite{CG03}. By taking the Radon measure $\mu = \sum
_{\lambda \in \Lambda } \mu _\lambda \delta _{\lambda}$ with support
in a discrete, relatively separated  set $\Lambda \subseteq \bC $, one
obtains the operator

$$ M_\mu F (z) = \sum \limits_{\lambda \in \Lambda} \mu_\lambda \,
F(\lambda) e^{\pi \bar{\lambda } z} \,  e^{-\pi
  \left|\lambda\right|^2/2}\, . $$
Comparing this to \eqref{eq:c3} one sees that this is a frame multiplier in the strict sense of the definition. As
an example of the main theorem, we will prove a lifting
theorem between Fock spaces. Although this is only a simple example of
the general lifting theorem for abstract frame multipliers, this is a
new result that may be of independent interest in complex analysis.

\begin{result} \it
Under mild assumptions on $\mu $ and   $\Lambda \subseteq \bC $, the
frame multiplier  $M_\mu$ is an isomorphism from
  $\cF^p_{m \sqrt{\mu }}$ to $\cF^p_{\frac{m}{\sqrt{\mu}}}$.
\end{result}
The precise formulation will be given in Section~5.2. \\

    \textbf{Organization of the paper.}
Section \ref{sec:prelnot0}
introduces notation and preliminary facts about frames, frame
multipliers,  and the associated Banach
spaces. Section \ref{sec:locfram0} revisits localized frames and
matrix algebras and offers a rigorous definition of the Banach  spaces
$\hpm $ associated to a frame.    
In Section \ref{sec:mulisom0} we give a precise formulation of our
main result and prove a very general lifting theorem for frame multipliers  between the
associated Banach spaces.  Section \ref{sec:appl0} applies
these findings to Gabor frames and to frames of reproducing kernels in
Fock space.

\section{Preliminaries and Notation} \label{sec:prelnot0}
For a standard reference to functional analysis and operator theory
and the notation used we refer  to \cite{conw1}. In particular we
 denote the space of bounded operators from a normed space $X$ into
a normed  space $Y$  by $\BL{X,Y}$, and set $\BL{X} = \BL{X,X}$.  By the
notation $c \asymp d$, we mean that there are constants $\alpha, \beta >
0$, such that $\alpha c \le d \le \beta c$.

\subsection{Sequence spaces}
We consider a discrete and countable index set $K$. We consider
functions on $K$ as  sequences and denote them as 
$c = \left(c_k\right)_{k \in K}$. We use a short
notation for component-wise operations, e.g. $c \cdot
d \isdef \left( c_k \cdot d_k \right)_{k \in K}$. 
We will represent an operator $M$ between  sequence
spaces over the  index set $K$  by a matrix,  i.e. $M = \left(M_{kl}\right)_{k , l \in K}$. 
The usual $\ell ^p$-space on the index set $K$ is denoted by $\ell
^p(K)$ or simply $\ell ^p$.

Let $m > 0$ be a strictly positive  weight on $K$, i.e., $m_k > 0$ for all $k \in K$. 
We define the space $\ell_{m}^p$ by
$c \in \ell_{m}^{p} \Longleftrightarrow
m \cdot c  \in \ell^p$, with the norm $\| c
\|_{\ell_{m}^p} = \| m \cdot c
\|_{\ell^p}  =  \big(\sum \limits_k m_k^p \left| c_k \right|^p \big)^{1/p} < \infty$. 
For  given $p$  with $1 \le p < \infty$, we denote by $p'$ the
conjugate index satisfying $\frac{1}{p}+\frac{1}{p'} = 1$. 
Then the  dual space of $\lpm $ can be identified with 
$\left( \ell_{m}^p \right)' \cong
\ell_{1/{m}}^{p'}$  with  the duality  
$$\left< {c} , {d} \right>_{\ell_{m}^p \times\ell_{1/{m}}^{p'}} 
= \sum \limits_k c_k \overline{d_k} \, .$$   

\subsubsection{Isomorphism between $\ell^p_{m}$ spaces} \label{sec:isoseq1}
For a sequence $\mu$
we use the notation $\diag{ \mu}$ for the corresponding diagonal
matrix, i.e. $\diag{\mu} = (\delta_{k,l} \, \mu_k)_{(k,l)\in K\times
  K}$. The associated operator is the multiplication operator
$$
\big(\diag{\mu}c\big)_k = \mu _k c_k \qquad k\in K \, .
$$
The primordial lifting theorem, already mentioned in the introduction, asserts that $\diag{\mu}$ is an
isomorphism.

\begin{lem.} \label{lpiso}
If $\mu >0$, then   the operator $\diag{\mu}$ is an isometric isomorphism  between
  $\ell_{\mu m}^p$ and $\ell_{m}^p$ with inverse
  $\diag{\mu}^{-1} = \diag{\mu ^{-1}}$. 
\end{lem.}
This follows from the identity
$$\norm{\ell_{m}^p}{\diag{\mu} c} = \sum
\limits_{k \in K} \left|\mu_k c_k\right|^p m_k^p = \norm{\ell_{\mu
    m}^p} {c} \, .$$
 Consequently,  one has the following isomorphisms
$$ \diag{\mu}: \ell^p_{\mu m} \rightarrow \ell_{m}^p \text{ and } \ell_{m}^p \rightarrow \ell^p_{m/\mu},$$
and 
$$ \diag{\mu}^{-1}: \ell_{m}^p \rightarrow \ell^p_{\mu m} \text{ and } 
\ell^p_{m/\mu} \rightarrow \ell_{m}^p. $$

\subsection{Frames}

A sequence $\Psi = \left( \psi_k\right)_{k\in K}$ in a separable
Hilbert space $\mathcal{H}$ is a  {\sl frame} for
$\mathcal{H}$~\cite{duffschaef1,ole1n}, if there are constants $A,B>0$
such that 
\begin{equation} \label{sec:frambasdef1}
\qquad A \|f\|^2\leq  \sum_{k\in K}|\langle f,\psi_k\rangle|^2 \leq B \|f\|^2
\qquad \text{
  for all } f\in \Hil \, .
\end{equation}
To every sequence $\Psi \subseteq \Hil $ one can associate four
operators: these are the {\em analysis operator} $C_\Psi: \Hil \to \ell ^2(K)$ defined by
$$
\left(C_{\Psi} f\right)_k = \left< f , \psi_k\right>  \qquad k\in K\, ,$$
the   {\em synthesis operator} $D_\Psi : \ell
 ^2(K) \to \Hil$ given by 
$$
D_\Psi c = D_{\Psi} (c_k) = \sum_{k\in K} c_k \psi_k \, ,
$$
the {\em frame operator} $S_\Psi: \Hil \to \Hil$ as 
$$
S_{\Psi} f = \sum_{k\in K} \left< f , \psi_k\right> \psi_k =  D_{\Psi}
C_{\Psi}f \, ,
$$
and the Gram operator $G_\Psi : \ell ^2(K) \to
 \ell ^2(K)$ with the  Gram matrix
$$
 \left({G}_{\Psi}\right)_{k,l} = \left< \psi_l, \psi_k
 \right> \, ,
 $$
 resulting in  $G_{\Psi } = C_{\Psi}
D_{\Psi}$.
These operators are not necessarily well-defined \cite{xxlstoeant11}. The definition of a
 frame \eqref{sec:frambasdef1} guarantees that all four operators are bounded on the
 respective spaces. In fact, $\Psi $ is a frame, if and only if $S_\Psi $ is
 bounded, positive and invertible on $\Hil $ ($C_\Psi$ is bounded, injective and has closed range or $D_\Psi$ is bounded and onto, respectively).
In this case, we can use  the {\em canonical dual frame}
$\widetilde \Psi := (\tilde \psi_k)_{k \in K}$,
given by $\tilde \psi_k = S_{\Psi}^{-1} \psi_k$ for all $k$ to
obtain 
the frame  reconstruction formula
\begin{equation}
  \label{eq:1} 
  f = \sum \limits_{k \in K} \Big< f , \psi_k\Big> \tilde \psi_k =
  \sum \limits_{k \in K} \left< f , \tilde \psi_k\right> \psi_k \text{
    for all  }f \in \Hil \, .
\end{equation}
In short notation, this reads as $D_\Psi C_{\tilde{\Psi}} =
D_{\tilde{\Psi}}C_\Psi =  \identity{\Hil}$. 

Analogously, for two sequences $\Psi$ and $\Phi$ in $\Hil $ one defines the cross-Gram operator by $G_{\Psi,
  \Phi}=C_{\Psi }D_{\Phi}$, which is again bounded, if $\Psi $ and
$\Phi $ are frames. For later use, we record the following identity
from the algebra of Gramian matrices:  we have
\begin{equation}
  \label{eq:re2}
G_{\Psi } G_{\DPsi } =  G_{\Psi , \DPsi } \, 
\end{equation}
Furthermore, using the notation  $B^\dagger$ for the  (Moore-Penrose) pseudo-inverse
of $B$ \cite{olepinv}, the relation between various Gram matrices associated to a
frame $\Psi $ is given by~\cite[Lemma~3.1]{forngroech1}
\begin{equation}
  \label{eq:re7}
  G_{\Psi , \DPsi }= G_{\Psi } ^\dagger G_\Psi \qquad \text{ and }
  \qquad G_{\DPsi }= \big(G_{\Psi } ^\dagger \big)^2 \,  G_\Psi \, .
\end{equation}

\subsection{Frame Multipliers}
The main object for abstract lifting theorems are frame
multipliers \cite{xxlmult1}. The following definition is a bit more general than what
is needed and uses two frames on different Hilbert
spaces. 
\begin{def.}
Let $\mathcal{H}_{1}$ and $\mathcal{H}_{2}$ be Hilbert spaces, let
${\Psi}=\{\psi_{k}\}\subseteq\mathcal{H}_{1}$ and
$\Phi=\{\phi_{k}\}\subseteq\mathcal{H}_{2}$ be two
frames. For given 
a sequence $m =(m_k)_{k\in K} \in \bC^K$,  
the frame multiplier with symbol $m$ is the  operator
$\MM_{m, \Psi, \Phi} : \mathcal{H}_{1} \rightarrow
\mathcal{H}_{2}$  defined by 
\begin{equation} \label{multdef}\MM_{m, \Psi, \Phi}f = \sum_{k \in K} m_k \langle f, \psi_k \rangle \phi_k . \end{equation}
If $m \in \ell^{\infty}(K)$,  then the  multiplier is bounded
 on $\Hil _1$  by $\sqrt{B_\Phi} \sqrt{B_\Psi} \norm{\infty}{m}$,
 where $B_\Phi$ and $B_\Psi$ refer to  the upper frame bounds in
 \eqref{sec:frambasdef1}.  
\end{def.}
If $\Psi = \Phi$, we  write $\MM_{m, \Psi}= \MM_{m, \Psi, \Phi}$. 
If $m$ is unbounded, $\MM_{m, \Psi, \Phi}$ is
possibly unbounded and defined only on a suitable domain. Clearly such
operators can be investigated under other assumptions on
$\Phi$ and $\Psi$~\cite{balsto09new}. 

 In the following we consider multipliers for a frame and its canonical dual, strictly positive
symbols $m>0$, and extend this definition in an natural way
to Banach spaces \cite{rahxxlXX}. 

\section{A Crash Course on Localized Frames}\label{sec:locfram0}

\subsection{Matrix Algebras}

Underlying the notion of localized frames are spaces of  matrices
over the index set $K$ that form an \emph{algebra} that is
\emph{regular} and \emph{inverse-closed}. We impose the following
assumptions on a Banach $^*$-algebra $\cA $ of matrices over $K$,
where $*$ is the usual matrix adjoint: 
 \begin{enumerate}
  \item[(ARI1)] \emph{Regularity.} Associated to $\cA $  is a class of positive weight functions $W $
    on $K$,  such that
    $$
  \cA \subseteq \bigcap \limits_{\begin{array}{c} p \in [1,\infty]
                                 \\ m \in    W\end{array}}
 \BL{\ell^p_{m}} \, .
 $$
We assume that  $W$ contains the constant weight $\mm \equiv 1$, and that, if $\mm
 \in W$, then also $1/\mm \in W$. 
  \item[(ARI2)] \emph{Inverse-closedness.} $\cA $ is inverse-closed in $\cB (\ell ^2(K))$, i.e., if $A\in
    \cA $ and $A$ is invertible on $\ell ^2(K)$, then also $A\inv \in
    \cA $. 
  \end{enumerate}

We  call a matrix algebra  $\cA$ satisfying these conditions an
\emph{ARI-algebra}. A weight  in $W$ is called $\cA$-admissible.

Spectral invariance refers to the phenomenon
that the spectrum of an operator is independent of the space on which
it acts. The prototypical example is a theorem of Wiener on
convolution operators~\cite{wi64}.   In our
context of ARI-algebras, the  spectral invariance is a consequence of
the axioms. 
  \begin{cor.}[Spectral invariance] \label{cor:specinv1}
    Assume that  $A\in \cA $ and $m\in W$. If $A$ is invertible on
    $\ell^2$, then $A$ is also invertible on $\ell_{m}^p$ for all $p \in [1,\infty]$.  
  \end{cor.}
Furthermore, it can be shown that every ARI-algebra is also pseudo-inverse-closed \cite[Thm.~3.4]{forngroech1}.

While the above set of assumptions is a convenient starting point for
the abstract theory of localized frames and lifting theorems, the reader should be aware
that the construction of concrete examples occupies a special place in
Banach algebra theory and in many applications.  
  The construction of inverse-closed matrix algebras and spectral invariance is an important branch of Banach algebra
  theory and analysis, see for instance \cite{Bas90,Krishtal2011,grle06}.

  \begin{exmp}\label{exjaffard}
As a concrete example we briefly discuss the Jaffard algebra of
matrices with off-diagonal decay of polynomial order. In this case the
index set $K$ is any uniformly discrete subset $K$ of $\rd $, i.e.,
$|k-k'|\geq \delta$ for $k\neq k', k,k'\in K$. By mapping a point $k$
to its nearest lattice point in $\delta \zd $, one may assume without
loss of generality that $K=\zd $. The Jaffard algebra $\mathcal{A}_s$ of decay $s>0$
consists of all matrices $A=(a_{kl})_{k,l\in K}$, such that for some
constant $C>0$
\begin{equation}
  \label{eq:c6}
  |a_{kl}| \leq C \, (1+|k-l|)^{-s} \qquad \text{ for all } k,l\in K
  \, .
\end{equation}

If $s> d$, then $\mathcal{A}_s$ is a Banach algebra with respect to
matrix multiplication. Jaffard's Theorem~\cite{jaffard90} asserts that
$\mathcal{A}_s$ is inverse-closed in $\mathcal{B}(\ell ^2(K))$ for
$s>d$. This is  a major result that has inspired a whole line of
research, see e.g. ~\cite{GK10,Koehldorfer2025,sun07a}.

Now let $m$ be a weight that satisfies the condition 
$ m(k +l) \le C  (1+|k|)^t  m(l)$ for all $k,l \in K$ and some $t\geq
0$ and $C>0$. Such a weight
is called $t$-moderate in \tfa . 
 Then it is easy to see that every $A\in \mathcal{A}
_s$ is bounded on $\ell ^p_{m}(K)$ for  $t$ satisfying $t< s-d$ and $1\leq
p \leq \infty $, see, e.g., \cite{forngroech1}. Thus $\mathcal{A} _s$ is an
ARI-algebra with the class of admissible weights containing all 
$t$-moderate weights with $t<s-d$.

Other examples of ARI-algebras are obtained by imposing different
off-diagonal decay on a matrix~\cite{forngroech1}, e.g., $|a_{kl}| \leq C e^{-a
  |k-l|^b}, k,l\in K$, for fixed $a>0$ and $0<b<1$ \cite{Bas90,grle06}. 
\end{exmp}

  \vspace{3mm}
  
  Corollary~\ref{cor:specinv1} ensures that a matrix in $\cA $ that is invertible on
$\ell ^2(K)$ is automatically invertible (and bounded) on
$\lpm $. 
In some situations one
may want to start with the invertibility on some  weighted $\ell
^{2}$-space and deduce the  invertibility on all $\lpm$ with
admissible weight.

The following lemma is an important special case. 
  For its  concise formulation of our next result we use the  notation
  \begin{equation}
    \label{eq:m1}
  B^\mu := \diag{\mu} \cdot B
  \cdot \diag{\tfrac{1}{\mu}}   \,  
  \end{equation}
  for the weighted matrices that are obtained by conjugation with a diagonal
  matrix. 
	
	The following result is elementary, but extremely useful. In
  essence, it reduces the study on the weighted spaces $\ell^2_\mu$ to the unweighted
  space, therefore constituting in some sense the backbone of our lifting theorem. 
\begin{prop.}\label{prop:Greoch0} Let  $\cA $ be an ARI-algebra, $\mu$ be an  admissible
  weight.
	Let $B$ be a matrix such that 
\begin{enumerate} 
\item[(i)] $B^\mu = \diag{\mu} \cdot B
  \cdot \diag{1/\mu} \in \cA$, and  
\item[(ii)] $B$ is invertible in $\ell^{2}_\mu$.
\end{enumerate}
Then $B$ is bounded and invertible on all
$\ell^p_{w}$ for $1\le p\le\infty$ and $w =
\mu \cdot m$ with $m \in W$, in particular on all $\ell^p$.
\end{prop.}
\begin{proof} The boundedness property  of $B:\ell^p_{m \mu} \rightarrow \ell^p_{m \mu} $ is easy and follows from the
 diagram in Figure~\ref{fig:matop1}, as $B^\mu \in \cA$ and $w = m\mu \in W$, 
 \begin{equation}
   \label{eq:c8}
 \ell^p_{m \mu}
  \stackrel{\diag{\mu}}{\longrightarrow} \ell^p_{m}
  \stackrel{B^\mu}{\longrightarrow}
  \ell^p_{m} \stackrel{\diag{1/\mu}}{\longrightarrow}
  \ell^p_{m \mu} \, .   
 \end{equation}
  The diagonal matrices $\diag{\mu}$ and $\diag{1/\mu}$ are
  isomorphisms and $B^\mu$ is bounded on $\ell ^p_{\fatm}$
for all $\fatm \in W$. 
In particular, for $m = \frac{1}{\mu} \in W$, we obtain the
boundedness on $\ell ^p (K)$. 

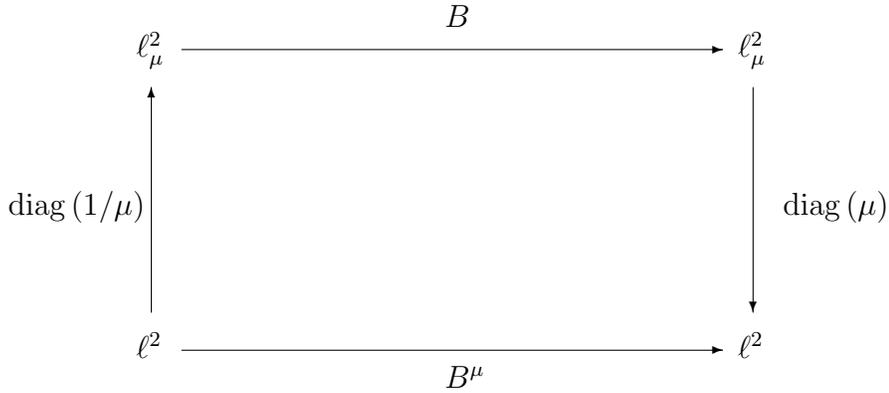
\begin{figure}[ht]
\center
	\begin{picture}(100,60)
	\put(9,51){$\ell^2_\mu$}
	\put(89,51){$\ell^2_\mu$}
	\put(15,52){\vector(4,0){72}}
	\put(50,55){$B$} 
	\put(11,17){\vector(0,4){30}}
	\put(91,47){\vector(0,-4){30}}
	\put(9,11){$\ell^2$}
	\put(-8,30){$\diag{1/\mu}$}
	\put(95,30){$\diag{\mu}$}
	\put(89,11){$\ell^2$}
	\put(15,12){\vector(4,0){72}}
	\put(50,7){$B^\mu$} 
	\end{picture}
\caption{The relation of $B$ and $B^\mu$} \label{fig:matop1}
\end{figure}

By  assumption (ARI1), 
$B^\mu$ maps $\ell^{2}$ to $\ell^{2}$, because the constant
weight $1$ is admissible. Since  $B$ is assumed
to be invertible on $\ell^{2}_\mu$ and 
 since all three operators  in the
factorization  $ B^{\mu } = \diag{\mu} \cdot
B \cdot \diag{1/\mu}$   are isomorphisms,
$B^\mu$ is then invertible on $\ell^{2}$. 
Because $B^\mu \in \cA$, we also have 
${\left(B^\mu\right)}^{-1} \in \cA$.

Consider the expected inverse of $B$ of the form $C 
= \diag{1/\mu} \cdot {\left(B^\mu\right)}^{-1} \cdot
\diag{\mu}$. Then, as in \eqref{eq:c8} 
$C$ is bounded on all $\ell^p_{m \mu}$ with $\fatm
\in W$. We compute the product  
$$C \cdot B 
= \diag{1/\mu}\cdot \Big({B^\mu}\Big)^{-1}\cdot \underbrace{\diag{\mu} \cdot B \cdot \diag{1/\mu}}_{B^\mu} \cdot \diag{\mu} = 
\identity{\ell^p_{m \mu}}.$$
Likewise 
\begin{eqnarray*} B \cdot C & = & \diag{1/\mu} \cdot \diag{\mu} \cdot B \cdot \diag{1/\mu} \cdot {\left(B^\mu\right)}^{-1} \cdot \diag{\mu} = \\
& = &
\diag{1/\mu} B^\mu {\left(B^\mu\right)}^{-1} \diag{\mu} = \identity{\ell^p_{m \mu}}.
\end{eqnarray*}
Consequently, $C$ is the inverse of $B$ 
on $\ell^p_{m \mu}$, so $B$ is invertible on
all $\ell^p_{m \mu}$.
\end{proof}

\subsection{$\cA$-Localized Frames  and Coorbit Spaces}
The idea of coorbit spaces is to define  Banach spaces attached to a given frame by
imposing conditions on the (dual) frame coefficients. Informally, 
one can impose a norm
$$
\|f\|_{\hpm} = \Big( \sum _{k\in K} |\langle f, \tilde{\psi_k }\rangle | m_k^p
\Big)^{1/p} = \|C_{\Psi} f \|_{\lpm } \,.
$$
As long as $\lpm \subseteq \ell ^2$, the frame expansion \eqref{eq:1}
makes sense, and then $\HHmp $, defined as those elements where this norm is
finite, is a subspace of $\Hil$. If $\lpm \not \subseteq \ell ^2 $, one
takes an abstract norm completion. 

For a meaningful theory of $\hpm $ one needs the concept of localized
frames~\cite{bacahela06,forngroech1,gr04-1}. These are 
central in all contexts  where frame theory is used to describe
properties of functions and distributions beyond mere Hilbert
space properties \cite{gr04-1,Dahlkeetal10, Balan2011}.
In this section, we review the concept of self-localized frames from \cite{forngroech1}, see also \cite{xxlgro14} for details.

\begin{def.} \label{def:localfram0}
Let $\cA$ be an ARI-algebra. Two frames   $\Psi$ and $\Phi$
in a Hilbert space $\Hil$ are said to be  mutually $\cA
$-localized, if their cross-Gramian matrix  $G_{\Psi,
  \Phi}$ 
	is in
$\cA$. We write  $\Psi \sim_{\cA} 
\Phi$. 

 If $\Psi \sim_{\cA} \Psi$, i.e., $G_{\Psi } \in \cA $,
 then 
 $\Psi $ is called an $\cA$-localized  frame. 
\end{def.}  
We mention that $\sim _{\cA}$ is an equivalence relation on the
collection of all $\cA$-localized frames. 
Let $\Psi $ be a frame with canonical dual frame $\DPsi $. 
Let $\Hil^{0 0}= \mathrm{span} \, \{\psi _k: k\in K\}$ be the subspace
of finite linear combinations of vectors in $\Psi $.

\begin{def.}
The Banach space  $\HHmp(\Psi,\DPsi)$  is defined as  the completion of
$\Hil^{0 0} $  with respect to the norm
$$
\norm{\HHmp}{f} =
\norm{\ell^p_{m}}{C_{\DPsi}(f)}$$ for $1 \le p <
\infty $.

For $p = \infty$ we  consider two  completions. First
$\Hil_{m}^0$ is completion  of $\Hil _{00} $ with
respect to the  norm $
\norm{\ell^\infty_{m}}{C_{\DPsi}(f)}$, and second  
$\Hil_{m}^{\infty}$ is   the completion of $\Hil^{0 0} $ in the weak$^*$-topology
with respect to duality  $\sigma(\Hil,
\Hil^{0 0} )$, such that   $\norm{\Hil_{m}^\infty}{f} =
\norm{\ell^\infty_{m}}{C_{\tilde \psi}(f)}$ is finite. 
\end{def.}
  Alternatively,
$\Hil_{m}^{\infty}$ can be identified with the bidual of
$\Hil _{m}^0$. For the subtleties of
this completion we refer to  \cite{xxlgro14}. 

Since $\hpm $ is defined by a property of the transform $f \mapsto
C_\Psi f $, we call $\hpm $ a \emph{coorbit} space with respect the
the frame $\Psi $, in analogy to the theory of coorbit spaces with
respect to group representations where the transform is a
representation coefficient~\cite{fg89jfa}.

The main theorem about localized frames ~\cite{forngroech1} is the fact that their
canonical  dual frames are also localized.

\begin{theorem}\label{locmain}
  If $\Psi $ is $\cA $-localized, then so is its canonical dual frame
  $\tilde{\Psi}$. The frames $\Psi $ and $\tilde{\Psi}$ are mutually $\cA
  $-localized, i.e., $\Psi \sim_{\cA} \Psi$ $\Longrightarrow$ $\DPsi \sim_{\cA} \DPsi$ and $\DPsi \sim_{\cA} \Psi$.
\end{theorem}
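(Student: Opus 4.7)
The plan is to reduce the theorem to the algebraic identities \eqref{eq:re7} relating the various Gram matrices of $\Psi$ and $\DPsi$, combined with the pseudo-inverse closedness of an ARI-algebra that was quoted immediately after Corollary~\ref{cor:specinv1}. The structure is: (i) get $G_\Psi^\dagger$ into $\cA$; (ii) read off everything else from the identities \eqref{eq:re7} using the algebra and $^*$-structure of $\cA$.

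\textbf{Step 1.} By hypothesis $G_\Psi \in \cA$. Applying the pseudo-inverse closedness of $\cA$ (imported from \cite[Thm.~3.4]{forngroech1}) yields $G_\Psi^\dagger \in \cA$. This single fact is the only nontrivial input; it plays the role that inverse closedness played in Proposition~\ref{prop:Greoch0}, but for operators with non-closed spectrum at $0$.

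\textbf{Step 2.} From the identities in \eqref{eq:re7},
$$
G_{\Psi,\DPsi} \;=\; G_\Psi^{\dagger}\, G_\Psi, \qquad G_{\DPsi} \;=\; \bigl(G_\Psi^{\dagger}\bigr)^{2} G_\Psi,
$$
and both right-hand sides are finite products of elements of $\cA$; since $\cA$ is a Banach algebra, both Gram matrices belong to $\cA$. The first inclusion is precisely $\Psi \sim_{\cA} \DPsi$, and the second says $G_{\DPsi}\in\cA$, i.e.\ $\DPsi\sim_{\cA}\DPsi$, the $\cA$-localization of the dual frame. The remaining direction $\DPsi \sim_{\cA}\Psi$ comes for free from $G_{\DPsi,\Psi} = (G_{\Psi,\DPsi})^{*}$ and the fact that $\cA$ is a $^*$-algebra.

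\textbf{Main obstacle.} Granted \eqref{eq:re7} and pseudo-inverse closedness, the argument is essentially formal; the substantive work is hidden entirely in Step~1. Showing that the Moore--Penrose inverse of an element of $\cA$ lies back in $\cA$ is the genuine content, and is the nontrivial consequence of the inverse closedness axiom (ARI2): the standard route is to regularize $G_\Psi$ on the orthogonal complement of its range so as to obtain a globally $\ell^{2}$-invertible element of $\cA$, invoke Corollary~\ref{cor:specinv1}-style spectral invariance, and then recover $G_\Psi^{\dagger}$ by compression. Since that step is cited from \cite[Thm.~3.4]{forngroech1}, the only remaining task here is the bookkeeping of recognizing $G_{\DPsi}$, $G_{\Psi,\DPsi}$, and $G_{\DPsi,\Psi}$ as products in $\cA$.
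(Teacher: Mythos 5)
Your argument is correct and coincides with the proof in \cite{forngroech1}, which is precisely where the paper sources Theorem~\ref{locmain} (the paper states it without proof); the same two ingredients --- pseudo-inverse closedness of $\cA$ and the identities \eqref{eq:re7}, combined with closure of $\cA$ under products and adjoints --- are also exactly how the paper itself argues in the proof of Proposition~\ref{def:multlocal2}. One minor caveat: your parenthetical sketch of how pseudo-inverse closedness would be proved (``regularize $G_\Psi$ on the orthogonal complement of its range'') is circular as stated, since that regularization is $G_\Psi + (\mathrm{I} - G_{\Psi,\DPsi})$ and membership of the projection $G_{\Psi,\DPsi}$ in $\cA$ is part of what is being established --- the argument in \cite{forngroech1} instead uses the Riesz holomorphic functional calculus around the nonzero part of the spectrum of $G_\Psi$ --- but since you defer that step entirely to the citation (as does the paper), this does not affect the validity of your proof.
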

In the language of the Gram matrices this fact is expressed by the
implication
\begin{equation}
  \label{eq:c7}
  G_{\Psi, \Psi } \in \cA \quad \Rightarrow \quad  G_{\tilde{\Psi },
    \tilde{\Psi}  } \in \cA   \quad \text{ and } \,\,  G_{
    \tilde{\Psi},\Psi  } \in \cA  \, .
\end{equation}

It follows  that the synthesis operator
 can be extended to $\lpm $: $D_{\Psi}$ is  bounded  from
 $\ell_{m}^p $ to $ \HHmp$
for all $1 \le p  \leq \infty$ and $p = 0$. This   follows from
Theorem~\ref{locmain}, because for every $\cA $-admissible weight $m$
\begin{equation}
  \label{eq:dbound}
\|D_\Psi c \|_{\hpm } =
\|C_{\tilde{\Psi}} D_\Psi c\|_{\lpm }= \|G_{\tilde{\Psi },\Psi}
  c\|_{\lpm} \leq   C \, \|c\|_{\lpm } \, .  
\end{equation}
Likewise the analysis operator $C_{\Psi } $ can be extended to a
bounded operator from $\hpm $ to $\lpm $ for every $\cA$-admissible
weight $m$. As a consequence \cite{forngroech1,xxlgro14}  the
frame reconstruction formula 
\begin{equation} \label{eq:locreconstr1}
f = \sum \limits_k \Big< f ,  \psi_k\Big> \tilde \psi_k  = \sum \limits_k \left< f , \tilde  \psi_k\right> \psi_k ,
\end{equation}
holds on $\hpm $  for $1 \le p \leq \infty$ and $\cA $-admissible
weight $m$. The convergence is  unconditional  for $1 \le p < \infty$ and weak-* unconditional convergence for $p = \infty$.

Theorem~\ref{locmain} is based strongly on the assumption that the
Gram matrix of $\Psi $ is in an inverse-closed matrix algebra
$\cA$. We consider it our main contribution to have identified those
appropriate abstract conditions on frames  (localization and inverse-closedness)
that  raise the   theory of lifting theorems to this highly abstract level.

Another important consequence of Theorem~\ref{locmain} is the
independence of the coorbit space $\hpm $ of the particular frame.
\begin{prop.}\label{locmain2}
Let $\Phi $ and $\Psi $ be two frames for $\Hil $. Assume that
$\Psi $ is $\cA $-localized and $ \Phi \sim_{\cA} \Psi$. Then the
spaces $\Hil_{m}^p ( \Psi, \DPsi)$, $\Hil_{m}^p (
\DPsi, \Psi)$,   and $ \Hil_{m}^p ( \Phi, \DPhi)$ coincide
and the respective  norms are equivalent, i.e. 
\begin{equation}\label{eq:eqnorm1}
\norm{\HHmp}{f} = \norm{\ell_{m}^p}{C_{\DPsi}f}  \asymp
\norm{\ell_{m}^p}{C_{\Psi} f} \asymp  \norm{\ell_{m}^p}{C_{\Phi}f} \, .
\end{equation}
\end{prop.}
It therefore makes sense to write unambiguously $\HHmp :=
\Hil_{m}^p ( \Psi, \DPsi) =  \Hil_{m}^p (\DPsi,
\Psi)$.  Thus $\hpm $ depends only on the $\sim _{\cA }
$-equivalence class of a  frame. 

The duality of the coorbit spaces is inherited from the $\ell
^p$-spaces:  we have $\left(\Hil_{m}^p\right)' =
\Hil_{1/m}^{p'}$ for $1 \le p < \infty$ with the duality
relation  
\begin{equation} \label{eq:dualrel}
\left< f , g \right>_{
\Hil_{m}^p \times \Hil_{1/m}^{p'}
} = 
\left< C_{\DPsi} f, C_{\Psi} g  \right>_{\ell^p_m \times
  \ell^{p'}_{1/m}} = \sum _{k\in K } \langle f, \tilde{\psi
  _k} \rangle \, \overline{\langle g, \psi _k\rangle} \, .
\end{equation}

By this construction also other properties known for the Hilbert space can be transferred to the coorbit spaces, like the fact that $D_\Psi^* = C_\Psi$, and related to that the following result \cite{xxlgro14}:
\begin{lem.}\label{lem:gramoper1} 
Let  $\Psi$ be a $\cA$-localized frame. 
Then the Gram matrix 
$G_{\Psi,\DPsi}$ is the orthogonal  projection from
$\ell ^2$  onto the range of $C_\Psi$ with kernel
$\kernel{D_{\Psi}}$. The extension of $G_{\Psi,\DPsi}$ is a
bounded projection from $\ell_{m}^p$  onto the range of
$C_\Psi \subseteq \lpm $ with kernel
$\kernel{D_{\Psi}}\subseteq \lpm $. 
In addition, $ \ell_{m}^p = \range{C_{\Psi}} \oplus \kernel{D_{\Psi}}$.
\end{lem.}

We will apply the decisive Proposition~\ref{prop:Greoch0}  to the Gram matrix
$G_\Psi $ of a localized frame. For this we need to know that various
weighted Gram matrices also belong to the same matrix algebra, in some sense an extension of \eqref{eq:c7}.

\begin{prop.} \label{def:multlocal2}
Let $\cA$ be an ARI-algebra, and $\Psi$ an $\cA$-localized frame, with $G_\Psi^\mu \in \cA$, additionally. Then the following Gram matrices are also in the matrix algebra $\cA$: 
 $G_\Psi^{1/\mu}$, $G_{\DPsi}^\mu$, $G_{\DPsi}^{1/\mu}$, $G_{\Psi,\DPsi}^\mu$, and $G_{\Psi,\DPsi}^{1/\mu}$.
\end{prop.} 
\begin{proof}
  First we note that the Gram matrix is self-adjoint, therefore $G_\Psi ^{1/\mu
  } =  (G_\Psi ^\mu )^*$ is in $\cA $ (this is where we
  use that $\cA $ is a $^*$-algebra). 
  
Note that if $B$ has closed range in $\ell^2_\mu$, so has $B^\mu$ (in $\ell^2$). Indeed consider $c \in \ell^2$, then 
$$ \norm{\ell^2}{B^\mu c} = \norm{\ell^2}{\diag{\mu} \cdot B
  \cdot \diag{\tfrac{1}{\mu}} c} = \norm{\ell^2_\mu}{B
  \cdot \diag{\tfrac{1}{\mu}} c} \ge $$
  $$ \ge m \cdot  \norm{\ell^2_\mu}{\diag{\tfrac{1}{\mu}}c}  =
   m \cdot  \norm{\ell^2}{c}. $$
   Then it is easy to see that the pseudo-inverse of a matrix $B$ with
  closed range  satisfies  
  ${\left(B^{\mu}\right)}^\dagger =
  {\left(B^\dagger\right)}^\mu$. 
  We know that $\cA$ is pseudo-inverse-closed \cite[Thm.~3.4]{forngroech1}, and therefore if $B^\mu \in \cA$ then also $\left(B^\mu\right)^\dagger$. In particular we have $\left(G_\Psi^\mu\right)^\dagger =  \left(G_\Psi^\dagger\right)^\mu = G_{\DPsi}^\mu \in \cA$.
  Since 
  $G_{\Psi,\DPsi} =
  G_{\Psi}^\dagger  G_{\Psi}$ by 
  \eqref{eq:re7}, we obtain
  that  $G_{\Psi,\DPsi}^\mu =
  \big(G_{\Psi}^\dagger \big)^\mu G_{\Psi}^\mu$ is also in $\cA
  $. 
Since $G_{\Psi,\DPsi}^{1/\mu} = \left( G_{\DPsi,\Psi}^\mu \right)^* =  \left( G_{\Psi,\DPsi}^\mu \right)^*$ we are done.

\end{proof}

\subsection{Galerkin-like Representations of Operators} 
To use the power of the underlying matrix algebra, we next represent
operators by their matrix with respect to a frame~\cite{xxlgro14,xxlframoper1}. 

Let $\Psi$ and $\Phi$ be frames with $\Psi \sim_{\cA} \Psi$ and $\Phi\sim_{\cA} \Psi$.
Let $m_1, m_2 \in W$  
and  $1 \le   p_1,p_2 \le \infty$. 
Then for a  bounded linear operator $O :
  \Hil_{m_1}^{p_1} \rightarrow \Hil_{m_2}^{p_2}$ we define the matrix $
  \Mate^{\left(\Phi , \Psi\right)}(O)$ 
by its entries \begin{equation} \label{eq:Galerkin}
{\Matt{\Phi , \Psi}{O}}_{k,l} = 
\left<O \psi_l, \phi_k
\right>_{\Hil^{p_2}_{m_2},\Hil_{1/m_2}^{p_2'}}
\qquad k,l\in K \, . 
\end{equation}
 In the notation of the frame operators the operator correpsonding to this matrix is
 given as
 $$
 \Matt{\Phi , \Psi}{O} = C_\Phi O D_\Psi \, .
 $$
If $O \in \mathcal{B}(  \Hil_{m_1}^{p_1},
\Hil_{m_2}^{p_2})$, then by the mapping properties of
$C_\Phi $ and $D_\Psi $, the matrix $ \Matt{\Phi , \Psi}{O}$  induces a bounded operator from $\ell_{m_1}^{p_1}$ into $\ell_{m_2}^{p_2}$. 
We call $\Matt{\Phi , \Psi}{O}$
 the \emph{(Galerkin) matrix of $O$ with respect to $\Phi$
   and $\Psi$}.
Conversely, every    operator (matrix) 
  $M\in
  \BL{\ell_{m_1}^{p_1},\ell_{m_2}^{p_2}}$ defines an
  operator 
$ {\Ope}^{(\Phi , \Psi)}(M) \in
 \BL{\Hil_{m_1}^{p_1},\Hil_{m_2}^{p_2}}$ by
\begin{equation} \label{sec:opematr2} \left( \Opp{\Phi , \Psi}{M} \right) f = \sum \limits_k  \Big( \sum
    \limits_j M_{k,j} \left<f, \psi_j\right> \Big) \phi_k  = D_\Phi M C_\Psi f,
\end{equation}
for $f \in \Hil_{m_1}^{p_1}$ with unconditional convergence for $1 \le p_i < \infty$ and unconditional weak$*$-convergence otherwise. 
We call $\Opp{\Phi , \Psi}{M}$ the \emph{operator of
  $M$ with respect to $\Phi$ and
  $\Psi$}. Altogether, we obtain a mapping 
$ {\Ope}^{(\Phi , \Psi)}:
\BL{\ell_{m_1}^{p_1},\ell_{m_2}^{p_2}} \to 
 \BL{\Hil_{m_1}^{p_1},\Hil_{m_2}^{p_2}}   $ from matrices to
operators between coorbit spaces.  

In \cite{xxlgro14} it was shown that $\Ope$ is a left-inverse
of $m$ as follows. 
\begin{prop.} \label{sec:propmatropfram1} Let $\Psi$ and $\Phi$
  be $\cA$-localized frames in $\Hil$ satisfying $\Phi\sim
  _{\mathcal{A}} \Psi $ and
  $m_1, m_2\in W$, 
  and $1 \le p_1,p_2 \le
  \infty$ or $p_1,p_2= 0$. Then 
$$ \left( \Op{\Phi , \Psi}{} \circ \Mat{\DPhi, \DPsi}\right)  = \left( {\Op{\DPhi, \DPsi} \circ \Mat{\Phi , \Psi}
}\right) = \identity{\BL{\Hil_{m_1}^{p_1},\Hil_{m_2}^{p_2}}}.$$
\end{prop.}

Within  this formalism,  a frame  multiplier corresponds to a  diagonal
matrix in the sense that
$$
\MM_{m, \Psi, \Phi} = \Opp{\Phi , \Psi}{\diag{m}}
= D_{\Phi } \diag{m} C_{\Psi } 
\, .
$$

\subsection{Invertibility of operators on coorbit spaces.} The factorization 
\sloppy \mbox{$\Matt{\Phi , \Psi}{O}= C_\Phi O D_{\Psi }$} implies that
$\kernel{D_\Psi}  \subseteq  \kernel{\Matt{\Phi , \Psi}{O}}$,  which is always non-trivial for redundant frames. Therefore
$\Matt{\Phi , \Psi}{O}$ can never be  invertible on the whole
sequence space for redundant
frames, but only on the range of the analysis operator. Using the
projection property of $G_{\Psi, \widetilde \Psi}$ from  Lemma
\ref{lem:gramoper1}, the invertibility of an operator on a coorbit
space can be understood  by means of its
Galerkin matrix as follows. 
\begin{lem.}\label{inverse0} Let $\Psi$ and $\Phi$
  be $\cA$-localized frames in $\Hil$ satisfying $\Phi\sim
  _{\mathcal{A}} \Psi $. Let $m$ be 
  an $\mathcal{A}$-admissible weight and $1 \le p \le
  \infty$ or $p= 0$. 
  A bounded operator $O: \Hil_{m}^p \rightarrow \Hil_{m}^p $ is invertible, if and only if the matrix
  $B_O = \Matt{\Psi, \Phi}{O} + \left( \identity{} - G_{\Psi,\DPsi} \right)$ is invertible on $\ell_{m} ^p$. 
\end{lem.}
The proof of this result can be found in \cite{grpfto22}, Lemma 4.10  and
Thm. 4.11., stated there for Gabor frames and pseudodifferential
operators, but using only general frame properties. 
For an alternative proof using the pseudoinverse $M^\dagger$ of a matrix
 $M$ and the formula $\Matt{\Phi , \Psi}{O\inv}= \Matt{\DPsi ,
   \DPhi}{O} ^\dagger $ see~\cite[Lemma~11]{xxlgro14}.

With this approach we can extend the spectral invariance result of
Corollary~\ref{cor:specinv1}  to the level of  Hilbert space, and we
obtain a universal invertibility result  \cite{xxlgro14}:
 \begin{theorem} \label{sec:theorem6} 
Let $\Psi$ and $\Phi$
  be $\cA$-localized frames in $\Hil$ satisfying $\Phi \sim
  _{\mathcal{A}} \Psi $.
  Assume that $O: \Hil \to \Hil $
  is invertible and that $ \Matt{\Psi  , \DPsi}{O}
  \in \mathcal{A}$. Then $O$ is invertible simultaneously on all
  coorbit spaces $\Hil^p_{m}$, $1\leq p \leq \infty $ for all $\cA$-admissible weights 
  $m$ and $1 \le p \le
  \infty$ or $p= 0$.  
 \end{theorem}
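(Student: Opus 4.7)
The plan is to reduce invertibility on every coorbit space to invertibility of a single associated matrix on the corresponding sequence space, and then to exploit the inverse-closedness of $\cA$ in order to bootstrap from the Hilbert space level ($p=2$, $m\equiv 1$) to every admissible pair $(p,m)$. Concretely, I would apply Lemma~\ref{inverse0} with the pair $(\Psi,\DPsi)$ in place of $(\Psi,\Phi)$; this is legitimate because Theorem~\ref{locmain} guarantees that the canonical dual $\DPsi$ is itself $\cA$-localized and satisfies $\DPsi \sim_{\cA} \Psi$.

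Set
\[
B_O := \Matt{\Psi,\DPsi}{O} + \bigl(\identity{} - G_{\Psi,\DPsi}\bigr).
\]
By Lemma~\ref{inverse0} at $p=2$, $m\equiv 1$, the invertibility of $O$ on $\Hil=\Hil^2_1$ is equivalent to the invertibility of $B_O$ on $\ell^2$. Next I would verify $B_O\in\cA$: the summand $\Matt{\Psi,\DPsi}{O}$ lies in $\cA$ by assumption, Theorem~\ref{locmain} yields $G_{\Psi,\DPsi}\in\cA$, and the identity is the unit of $\cA$. Since $\cA$ is inverse-closed in $\cB(\ell^2)$ by (ARI2), Corollary~\ref{cor:specinv1} upgrades invertibility of $B_O$ on $\ell^2$ to invertibility of $B_O$ on every $\ell^p_m$ with $\cA$-admissible $m$ and $1\le p\le\infty$ (or $p=0$). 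A final application of Lemma~\ref{inverse0} in the converse direction, now on each $\ell^p_m$, transfers invertibility of $B_O$ back to invertibility of $O$ on $\Hil^p_m$, which is the desired conclusion.

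The substantive point, and the only place where redundancy would otherwise cause trouble, is the correction term $\identity{}-G_{\Psi,\DPsi}$ in the definition of $B_O$. For a redundant frame the raw matrix $\Matt{\Psi,\DPsi}{O}=C_\Psi O D_{\DPsi}$ always has the nontrivial kernel $\kernel{D_\Psi}=\kernel{D_{\DPsi}}$, so it is never invertible on the full sequence space and spectral invariance cannot be applied to it directly. Adding $\identity{}-G_{\Psi,\DPsi}$, which is the complementary projection onto that kernel by Lemma~\ref{lem:gramoper1}, repairs this defect along the splitting $\ell^p_m = \range{C_\Psi}\oplus \kernel{D_\Psi}$, while Lemma~\ref{inverse0} packages the fact that this correction is benign with respect to the invertibility of $O$ itself. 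Once this decomposition is in place, the theorem becomes a clean two-line spectral-invariance bootstrap via Corollary~\ref{cor:specinv1}.
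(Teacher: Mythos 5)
Your proof is correct and is precisely the argument the paper intends: the paper presents Theorem~\ref{sec:theorem6} as the combination of Lemma~\ref{inverse0} with the spectral invariance of Corollary~\ref{cor:specinv1} (deferring details to \cite{xxlgro14}), which is exactly your reduction to the matrix $B_O$, the verification that $B_O\in\cA$, and the bootstrap of its invertibility from $\ell^2$ to every $\ell^p_{m}$. The only step you leave implicit is that $O$ is bounded on each $\Hil^p_{m}$ in the first place (so that Lemma~\ref{inverse0} applies in the converse direction); this follows from the reconstruction $O = D_{\DPsi}\,\Matt{\Psi,\DPsi}{O}\,C_{\Psi}$ of Proposition~\ref{sec:propmatropfram1} together with $\Matt{\Psi,\DPsi}{O}\in\cA$ and the regularity axiom (ARI1).
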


\section{Lifting Properties for Localized Multipliers}
\label{sec:mulisom0}

After setting up the theory of localized frames and matrix algebras,
we can  finally provide a rigorous formulation of our main result and
prove 
the  lifting theorem for coorbit spaces. 

\begin{theorem}[Abstract lifting theorem for coorbit
  spaces] \label{th:multrecip} Let $\cA$ be an ARI-algebra and
  $\mu$, $m \sqrt{\mu}$ and
  $\frac{m}{\sqrt{\mu}}$ be $\cA$-admissible weights.  
Let $\Psi$ be $\cA$-localized frame satisfying the additional
conditions that
$  G_{\Psi}^{\mu} \in \cA$. 

 Then the frame multiplier 
$\MM_{\mu, \Psi} $ is an isomorphism   from  $ \Hil_{m
  \sqrt{\mu}}^p$ onto $  \Hil_{m/\sqrt{\mu}}^p$. 
Likewise, $\MM_{1/\mu , \Psi}$ is an isomorphism from $\Hil_{\frac{m}{\sqrt{\mu}}}^p$ onto $\Hil_{m \sqrt{\mu}}^p$.
\end{theorem}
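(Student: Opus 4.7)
My plan is to prove the theorem in three stages: boundedness by direct factorization, invertibility at the Hilbert-space anchor point $p=2$, $m\equiv 1$ via coercivity, and extension to general $p$ and $\cA$-admissible $m$ via spectral invariance. For boundedness I factor $\MM_{\mu,\Psi}=D_\Psi\circ\diag{\mu}\circ C_\Psi$ and verify each factor. The analysis operator $C_\Psi:\Hil^p_{m\sqrt{\mu}}\to\ell^p_{m\sqrt{\mu}}$ is bounded because $m\sqrt{\mu}$ is $\cA$-admissible; by Lemma~\ref{lpiso} the multiplication $\diag{\mu}:\ell^p_{m\sqrt{\mu}}\to\ell^p_{m/\sqrt{\mu}}$ is an isometric isomorphism, since $\mu\cdot(m/\sqrt{\mu})=m\sqrt{\mu}$; and $D_\Psi:\ell^p_{m/\sqrt{\mu}}\to\Hil^p_{m/\sqrt{\mu}}$ is bounded by \eqref{eq:dbound}. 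Composing yields the upper norm estimate.

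For the Hilbert-space anchor, I use the duality $\Hil^2_{1/\sqrt{\mu}}=(\Hil^2_{\sqrt{\mu}})'$ and compute the quadratic form via \eqref{eq:dualrel} together with the reconstruction formula \eqref{eq:1}:
\[
\langle\MM_{\mu,\Psi}f,f\rangle_{\Hil^2_{1/\sqrt{\mu}}\times\Hil^2_{\sqrt{\mu}}}=\sum_{k\in K}\mu_k|\langle f,\psi_k\rangle|^2=\|C_\Psi f\|_{\ell^2_{\sqrt{\mu}}}^2,
\]
which is equivalent to $\|f\|_{\Hil^2_{\sqrt{\mu}}}^2$ by the norm equivalence of Proposition~\ref{locmain2}. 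Coercivity combined with the continuity from the previous step shows, via a Lax--Milgram style argument, that $\MM_{\mu,\Psi}:\Hil^2_{\sqrt{\mu}}\to\Hil^2_{1/\sqrt{\mu}}$ is an isomorphism.

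To extend to general $p$ and $m$, I pass to the Galerkin matrix $\Matt{\Psi,\Psi}{\MM_{\mu,\Psi}}=G_\Psi\diag{\mu}G_\Psi$ and apply the inverse-closedness toolkit. Following the strategy of Lemma~\ref{inverse0} (adapted to the two-weight setting), invertibility of $\MM_{\mu,\Psi}$ reduces to invertibility of an augmented matrix $B_O=G_\Psi\diag{\mu}G_\Psi+(I-G_{\Psi,\DPsi})$ on appropriate sequence spaces. The hypothesis $G_\Psi^\mu\in\cA$ combined with Proposition~\ref{def:multlocal2} places $G_\Psi^{1/\mu}$, $G_{\DPsi}^{\mu}$, $G_{\DPsi}^{1/\mu}$, $G_{\Psi,\DPsi}^{\mu}$, and $G_{\Psi,\DPsi}^{1/\mu}$ in $\cA$, which in turn puts a suitable conjugation of $B_O$ inside $\cA$. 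Proposition~\ref{prop:Greoch0}, fed with the $\ell^2_\mu$-invertibility transferred from the previous step via Proposition~\ref{sec:propmatropfram1}, then promotes invertibility to every $\ell^p_{w}$ with $w\in W$, and retranslating to operators gives the desired isomorphism $\MM_{\mu,\Psi}:\Hil^p_{m\sqrt{\mu}}\to\Hil^p_{m/\sqrt{\mu}}$. The statement for $\MM_{1/\mu,\Psi}$ follows by the same argument with $\mu$ replaced by $1/\mu$, since axiom (ARI1) and Proposition~\ref{def:multlocal2} guarantee admissibility of $1/\mu$ and that $G_\Psi^{1/\mu}\in\cA$. The main obstacle I expect is the weight asymmetry: $\MM_{\mu,\Psi}$ bridges two distinct coorbit spaces, whereas Lemma~\ref{inverse0} and Proposition~\ref{prop:Greoch0} are naturally phrased for endomorphisms of a single space. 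Aligning the two $\ell^p$ models via the $\diag{\mu}$-shift on the sequence side, and choosing the conjugation so that admissibility of $\mu$ (rather than $\sqrt{\mu}$, which we do not assume) suffices, is the delicate bookkeeping for which Proposition~\ref{def:multlocal2} is indispensable.
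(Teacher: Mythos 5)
Your first two stages (boundedness via the factorization $D_\Psi\,\diag{\mu}\,C_\Psi$ and coercivity at $p=2$, $m\equiv 1$) coincide with the paper's Lemma~\ref{lem:boundmul1} and Theorem~\ref{th:coerc} and are fine. The gap is in the third stage. You correctly name the obstacle --- $\MM_{\mu,\Psi}$ maps between two \emph{different} coorbit spaces, while Lemma~\ref{inverse0} and Proposition~\ref{prop:Greoch0} govern endomorphisms of a single space --- but you do not resolve it, and the object you propose to feed into the inverse-closedness machinery cannot work. Since diagonal matrices commute, every similarity transform of the Galerkin matrix of $\MM_{\mu,\Psi}$ satisfies
\[
\diag{\nu}\,G_\Psi\,\diag{\mu}\,G_\Psi\,\diag{1/\nu}\;=\;G_\Psi^{\nu}\,\diag{\mu}\,G_\Psi^{\nu},
\]
so the factor $\diag{\mu}$ survives \emph{every} conjugation; for unbounded $\mu$ it is not bounded on $\ell^2$, and the hypotheses ($G_\Psi,\,G_\Psi^{\mu}\in\cA$) give no control whatsoever over $G_\Psi^{\nu}\diag{\mu}G_\Psi^{\nu}$. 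Hence no conjugate of your $B_O=G_\Psi\diag{\mu}G_\Psi+(\identity{}-G_{\Psi,\DPsi})$ can be placed in $\cA$; moreover $B_O$ itself is a sum of a map $\ell^p_{m\sqrt{\mu}}\to\ell^p_{m/\sqrt{\mu}}$ and an endomorphism, so it is not an operator on any single sequence space and Lemma~\ref{inverse0} does not apply to it. This is not ``delicate bookkeeping'' that can be fixed by a $\diag{\mu}$-shift; it is where the proof genuinely needs a new idea.

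The missing idea is the composition trick. The paper studies $\MM_{1/\mu,\Psi}\MM_{\mu,\Psi}$, which \emph{is} an endomorphism of $\Hil^2_{\sqrt{\mu}}$ (an isomorphism by your coercivity step applied to both factors), and whose Galerkin matrix is $G_\Psi\,G_\Psi^{1/\mu}\,G_\Psi$: here the diagonal factors have been absorbed into conjugated Gramians, which lie in $\cA$ by hypothesis and Proposition~\ref{def:multlocal2}. Setting $B=G_\Psi G_\Psi^{1/\mu}G_\Psi+(\identity{}-G_{\Psi,\DPsi})$, one checks $B\in\cA$ and $B^{\mu}=G_\Psi^{\mu}G_\Psi G_\Psi^{\mu}+\identity{}-G_{\Psi,\DPsi}^{\mu}\in\cA$, and then Lemma~\ref{inverse0} together with Proposition~\ref{prop:Greoch0} promotes invertibility of the composition to every $\Hil^p_{m\sqrt{\mu}}$, which yields \emph{injectivity} of $\MM_{\mu,\Psi}$; the same argument with $\mu$ and $1/\mu$ exchanged, applied to $\MM_{\mu,\Psi}\MM_{1/\mu,\Psi}$ on $\Hil^p_{m/\sqrt{\mu}}$, yields \emph{surjectivity}. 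Finally, a small misattribution: Proposition~\ref{sec:propmatropfram1} only says that $\Op{\Phi,\Psi}$ is a left inverse of the Galerkin map; the transfer of invertibility between operators and matrices is the content of Lemma~\ref{inverse0}, not of that proposition.
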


The remainder of this section is devoted to the proof of the main
theorem, which we  split  into lemmas. 

\subsection{Boundedness of the Multiplier}
We first formulate the mapping properties of frame multipliers on
coorbit spaces. Throughout we assume that $\cA$ is an ARI-algebra.  
For this setting one can go beyond the bounded symbol setting  \cite{xxlmult1} to get bounded operators. 
\begin{lem.} \label{lem:boundmul1}  Let $\Psi$ and $\Phi$ be $\cA$-localized frames.
Let ${\mu}$ and $m$  be  weights such that both  ${m
  \sqrt{\mu}}$ and ${m/\sqrt{\mu}}$ are  $\cA$-admissible. 
Then for all $1\le p \le \infty$ and $p= 0$
$$ \MM_{\mu,\Phi,\Psi} \in \BL{\Hil_{m \sqrt{\mu}}^p,\Hil_{\frac{m}{\sqrt{\mu}}}^p} \text{ and } \MM_{1/\mu,\Phi,\Psi}  \in \BL{\Hil_{\\frac{m}{\sqrt{\mu}}}^p,\Hil_{m \sqrt{\mu}}^p}.
$$
\end{lem.}
\begin{proof} Recall that the frame multiplier factorizes as 
$ \MM_{\mu,\Phi,\Psi} = D_\Phi ~ \diag{\mu} ~ C_\Psi$. Since
$\diag{\mu}$ is an isomorphism from   
$\ell_{m \sqrt{\mu}}^p$ onto
$\ell_{\frac{m}{\sqrt{\mu}}}^p$ by
 Lemma~ \ref{lpiso}, and since both  $D_\Phi$ and
$C_{\Psi}$ are bounded by \eqref{eq:dbound}, we obtain the result. 
\end{proof}

\subsection{Coercivity of the Multiplier}

For the next step we use a coercivity argument that is familiar from the theory of
elliptic partial differential operators~\cite{evans}. In the
context of time-frequency analysis a similar argument was used
in~\cite{grochenig2011isomorphism}. 

\begin{theorem} \label{th:coerc} Let $\cA$ be an ARI-algebra. 
Let $\sqrt{\mu}$ be an $\cA$-admissible weight and $\Psi$ be an $\cA$-localized frame.
 Then the frame multiplier 
$$\MM_{\mu, \Psi} : \Hil_{\sqrt{\mu}}^2 \rightarrow  \Hil_{1/\sqrt{\mu}}^2 $$
is an isomorphism.

Likewise, $\MM_{1/\mu, \Psi}$ is an  isomorphism from $\Hil_{1/\sqrt{\mu}}^2 $ onto $\Hil_{\sqrt{\mu}}^2$.
\end{theorem}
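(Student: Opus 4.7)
The plan is to prove this $p=2$ case through the Lax--Milgram theorem applied on the Hilbert space $\Hil_{\sqrt{\mu}}^2$. The coorbit space $\Hil_{\sqrt{\mu}}^2$ is a Hilbert space (its norm $\|f\|_{\Hil^2_{\sqrt{\mu}}} = \|C_{\DPsi} f\|_{\ell^2_{\sqrt{\mu}}}$ comes from an inner product), and by the duality formula \eqref{eq:dualrel} its antidual is precisely $\Hil_{1/\sqrt{\mu}}^2$. Consequently, an isomorphism $\Hil_{\sqrt{\mu}}^2 \to \Hil_{1/\sqrt{\mu}}^2$ can be produced by exhibiting a bounded, coercive sesquilinear form on $\Hil_{\sqrt{\mu}}^2$ whose associated Riesz-type operator coincides with $\MM_{\mu,\Psi}$. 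This is the same coercivity mechanism that underlies elliptic PDEs, as hinted in the preamble to the theorem.

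The natural candidate is
\begin{equation*}
a(f,g) \isdef \sum_{k \in K} \mu_k \langle f,\psi_k\rangle \overline{\langle g,\psi_k\rangle}
= \langle C_{\Psi} f,\, C_{\Psi} g\rangle_{\ell^2_{\sqrt{\mu}}}.
\end{equation*}
Since $\sqrt{\mu}$ is $\cA$-admissible and $\Psi$ is $\cA$-localized, Proposition~\ref{locmain2} gives the norm equivalence $\|C_{\Psi} f\|_{\ell^2_{\sqrt{\mu}}} \asymp \|f\|_{\Hil_{\sqrt{\mu}}^2}$. Cauchy--Schwarz then yields the boundedness $|a(f,g)| \le \|C_{\Psi} f\|_{\ell^2_{\sqrt{\mu}}} \|C_{\Psi} g\|_{\ell^2_{\sqrt{\mu}}} \lesssim \|f\|_{\Hil_{\sqrt{\mu}}^2}\|g\|_{\Hil_{\sqrt{\mu}}^2}$, while the positivity $\mu > 0$ together with the same equivalence gives coercivity $a(f,f) = \|C_{\Psi} f\|_{\ell^2_{\sqrt{\mu}}}^2 \asymp \|f\|_{\Hil_{\sqrt{\mu}}^2}^2$.

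The decisive identification step is to verify that
\begin{equation*}
\langle \MM_{\mu,\Psi}f,\, g\rangle_{\Hil_{1/\sqrt{\mu}}^2, \Hil_{\sqrt{\mu}}^2} = a(f,g).
\end{equation*}
Applying \eqref{eq:dualrel} with $m = 1/\sqrt{\mu}$ and expanding $\MM_{\mu,\Psi}f = \sum_{j} \mu_j \langle f,\psi_j\rangle \psi_j$, the left side becomes $\sum_k \sum_j \mu_j \langle f,\psi_j\rangle \langle \psi_j,\tilde{\psi}_k\rangle \overline{\langle g,\psi_k\rangle}$. Interchanging the sums and using the unconditional frame reconstruction $g = \sum_k \langle g,\psi_k\rangle \tilde{\psi}_k$ from \eqref{eq:locreconstr1}, the inner sum collapses to $\overline{\langle g,\psi_j\rangle}$, leaving exactly $a(f,g)$. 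With this identification in hand, Lax--Milgram delivers the isomorphism from $\Hil_{\sqrt{\mu}}^2$ onto its antidual $\Hil_{1/\sqrt{\mu}}^2$, and this operator must agree with $\MM_{\mu,\Psi}$. The second claim for $\MM_{1/\mu,\Psi}$ follows by reapplying the theorem to the weight $1/\mu$, noting that axiom (ARI1) ensures $\sqrt{1/\mu} = 1/\sqrt{\mu}$ is also $\cA$-admissible.

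The main obstacle is purely technical: to justify the interchange of summations and the insertion of the frame reconstruction inside the duality bracket for $f,g$ in the coorbit spaces rather than in $\Hil$. I would handle this by first carrying out the calculation on the dense subspace $\Hil^{00}$, where all sums are finite, and then extending to $\Hil_{\sqrt{\mu}}^2$ by density using the boundedness of $\MM_{\mu,\Psi}: \Hil_{\sqrt{\mu}}^2 \to \Hil_{1/\sqrt{\mu}}^2$ provided by Lemma~\ref{lem:boundmul1} and the joint continuity of $a$ from the previous paragraph.
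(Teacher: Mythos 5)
Your proposal is correct and follows essentially the same route as the paper: both arguments rest on the coercivity of the sesquilinear form $[f,g]=\langle \MM_{\mu,\Psi}f,g\rangle_{\Hil^2_{1/\sqrt{\mu}}\times\Hil^2_{\sqrt{\mu}}}$, which equals $\|C_\Psi f\|^2_{\ell^2_{\sqrt{\mu}}}$ on the diagonal, combined with the norm equivalence $\|C_\Psi f\|_{\ell^2_{\sqrt{\mu}}}\asymp\|f\|_{\Hil^2_{\sqrt{\mu}}}$ from Proposition~\ref{locmain2}. The only cosmetic difference is that you package the conclusion via Lax--Milgram, whereas the paper runs the equivalent argument by hand (lower bound $\Rightarrow$ injectivity and closed range, then surjectivity via the closed range theorem and the self-adjointness $\MM_{\mu,\Psi}^*=\MM_{\mu,\Psi}$).
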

\begin{proof}  Using  Lemma \ref{lem:boundmul1} with $m
  \equiv 1$, the frame multiplier  $\MM_{\mu,\Psi} :
  \Hil_{\sqrt{\mu}}^2 \rightarrow \Hil_{1/\sqrt{\mu}}^2$ is
  bounded. As in the case of elliptic differential operators we
  introduce the  sesquilinear form 
  $$[f,g] \isdef \langle \MM_{\mu, \Psi} f,  g\rangle_{\Hil_{1/\sqrt{\mu}}^2 \times \Hil_{\sqrt{\mu}}^2} $$ for $f,g \in \Hil_{\sqrt{\mu}}^2$. 
By the duality relation \eqref{eq:dualrel},  the dual space of
  $\Hil^2_{\sqrt{\mu}}$   is  $ \Hil^2_{1/\sqrt{\mu}}$, consequently
  both brackets above are 
  well-defined. 
  Therefore 
we obtain 
$$\left[ f , f \right] = \left< \MM_{\mu, \Psi} f, f\right>_{\Hil_{1/\sqrt{\mu}}^2 \times
  \Hil_{\sqrt{\mu}}^2} =  \left< \diag{\mu} C_{\Psi} f, C_{\Psi} f\right>_{\ell_{1/\sqrt{\mu}}^2,\ell_{\sqrt{\mu}}^2} $$
$$ = \sum \limits_k \mu_k \left(C_{\Psi} f\right)_k
\overline{\left(C_{\Psi} f\right)_k} = \norm{\ell_{\sqrt{\mu}}^2}{
  C_{\Psi} f }^2 \, .$$ 
Note that the $\Hil ^2_{{\sqrt{\mu}}}\,$-norm of $f$ is defined by
$\|C_\DPsi f\|_{\ell ^2_{\sqrt{\mu}}}$ with respect to the dual
frame. By the main theorem of localized frames
(Proposition~\ref{locmain2}) this norm is equivalent to $\|C_\Psi f
\| _{\ell ^2_{\sqrt{\mu}}}$. Thus we conclude that
$$
[f,f] = \|C_\Psi f \| _{\ell ^2_{\sqrt{\mu}}}^2  
\stackrel{\eqref{eq:eqnorm1}}{\asymp} \|C_\DPsi f\|_{\ell ^2_{\sqrt{\mu}}} = \|f\|^2_{\Hil_{\sqrt{\mu}}^2}
\, .$$
Consequently, if $\MM_{\mu, \Psi} f = 0$ in $\Hil_{\sqrt{\mu}}^2$, then $C_{\Psi} f = 0$, and therefore $f=0$. So $\MM_{\mu, \Psi}$ is injective.

For a converse inequality, we observe that for $f \not=0$
\begin{align*}
   \|\MM_{\mu, \Psi} f \|_{\Hil_{\frac{1}{\sqrt{\mu}}}^2} &= \sup _{\|g\|_{\Hil_{\sqrt{\mu}}^2} =1}
   \big| \langle \MM_{\mu, \Psi} f,  g\rangle_{\Hil_{1/\sqrt{\mu}}^2 \times \Hil_{\sqrt{\mu}}^2}  \big|  \\
 &  \geq \ \big| \Big\langle \MM_{\mu, \Psi} f,  \frac{f}{\| f
     \|_{\Hil_{\sqrt{\mu}}^2}}\Big\rangle_{\Hil_{1/\sqrt{\mu}}^2
     \times \Hil_{\sqrt{\mu}}^2} \big| = \frac{[f,f]}{\| f
     \|_{\Hil_{\sqrt{\mu}}^2}}\asymp 
   \|f\|_{\Hil_{\sqrt{\mu}}^2} \, .
  \end{align*}
     Consequently, $\MM_{\mu,\Psi} : \Hil_{\sqrt{\mu}}^2 \rightarrow \Hil_{1/\sqrt{\mu}}^2$ has
closed  range. By the closed range theorem, its adjoint operator
   $\MM_{\mu,\Psi}^* = \MM_{\mu,\Psi}$ is therefore onto. It
   follows that   $\MM_{\mu,\Psi} $ is an isomorphism between $
   \Hil_{\sqrt{\mu}}^2$and $ \Hil_{1/\sqrt{\mu}}^2$, as  claimed. 
\end{proof}

\subsubsection{The Galerkin Matrix of $\MM_{1/\mu, \Psi} \MM_{\mu, \Psi}$}
As an easy consequence of Theorem \ref{th:coerc}, the  composition of
these two frame multipliers gives an isomorphism of the weighted
Banach spaces into itself.  
\begin{cor.} \label{comp1}  Let 
$\Psi$ be a $\cA$-localized frame and  
 $\sqrt{\mu}$ be an $\cA$-admissible weight.  

(i) The composition $\MM_{1/\mu, \Psi} \MM_{\mu, \Psi}$ is an
  isomorphism on $\Hil_{\sqrt{\mu}}^2$. Likewise,  
   $\MM_{\mu, \Psi} \MM_{1/\mu, \Psi} $ is an isomorphism  on $\Hil_{\frac{1}{\sqrt{\mu}}}^2$. 

   (ii) The matrix associated to $\MM_{\frac{1}{\mu}, \Psi}
   \MM_{\mu, \Psi}$ is given by 
\begin{eqnarray} \label{eq:re5}
   \Matt{\Psi,\Psi}{\MM_{\frac{1}{\mu}, \Psi} \MM_{\mu,
  \Psi}} =  G_{\Psi} G_{\Psi}^{1/\mu} G_{\Psi}. 
\end{eqnarray}
 \end{cor.}

 \begin{proof}
   (i) follows from the mapping properties of
   Lemma~\ref{lem:boundmul1} and from  Theorem~\ref{th:coerc}. 

(ii) The formula of its matrix representation is given by
\begin{eqnarray} \label{eq:matrmul2}
   \Matt{\Psi,\Psi}{\MM_{\frac{1}{\mu}, \Psi} \MM_{\mu, \Psi}} \nonumber
   &=&  C_\Psi \MM_{1/\mu, \Psi} \MM_{\mu, \Psi} D_\Psi \nonumber \\
   &=& C_\Psi D_\Psi \diag{1/\mu} C_\Psi D_\Psi \diag{\mu} C_\Psi D_\Psi \nonumber \\
   &=& G_{\Psi} G_{\Psi}^{1/\mu} G_{\Psi}. 
\end{eqnarray}
   
 \end{proof}

 \subsection{Proof of the General Lifting Theorem}
 We finally prove the main theorem and show that the frame multiplier 
$\MM_{\mu, \Psi} $ is a (Banach space) isomorphism from $
\Hil_{m \sqrt{\mu}}^p$ onto $
\Hil_{m/\sqrt{\mu}}^p$ whenever all occurring weights are
$\cA$-admissible.  
\begin{proof}[Proof of Theorem~\ref{th:multrecip}]
(i)   We  consider the matrix
	\begin{equation} \label{eq:split1}
  B = \Matt{ \Psi,\Psi}{\MM_{1/\mu,
    \Psi} \MM_{\mu, \Psi}} + \left( \identity{} -
  G_{\Psi,\DPsi}\right)\, .
\end{equation}
Since $\MM_{1/\mu,
    \Psi} \MM_{\mu, \Psi}$ is invertible on $\Hil ^2 _{\mu }$ by
  Corollary~\ref{comp1},  we obtain from Lemma \ref{inverse0}  that
  $B$  is an isomorphism on $\ell_{\sqrt{\mu}}^2$.
  
(ii) Next we check that $B\in \cA $. Since by assumption  $G_{\Psi
}$ and $G_{\Psi }^\mu$ are in  $\cA $,
Proposition~\ref{def:multlocal2} implies that also  $G_{\Psi }^{1/\mu }\in
\cA $ and $G_{\Psi, \DPsi}  \in \cA $. The
factorization~\eqref{eq:re5} now   
implies  that  $B \in \cA$. 

(iii) We need to  show that also $B^\mu \in \cA$.
A small computation shows that 
\begin{eqnarray*} B^\mu  & = &  \diag{\mu} \cdot B \cdot \diag{1/\mu} \\
& = & \diag{\mu} \left( G_{\Psi} G_{\Psi}^{1/\mu} G_{\Psi}  + \left( \identity{} - G_{\Psi,\DPsi}\right) \right)  \diag{1/\mu}  \\
& = &  G_{\Psi}^{\mu} G_{\Psi}
      G_{\Psi}^{\mu}  +\identity{} -
      G_{\Psi,\DPsi}^{\mu}\, .
\end{eqnarray*}
From Proposition \ref{def:multlocal2} it follows that  $B^\mu \in \cA $ as well.

(iv) Since $B$ is invertible on $\ell ^2_\mu$ and
$B^\mu \in \cA $,  Proposition \ref{prop:Greoch0} implies
that  $B$ is (boundedly)  invertible  on all $\ell^p_{\mu
  w}$ for $1\le p\le\infty$ and $w \in W$.  
By assumption this includes the space $\ell^p_{m\sqrt{\mu}}$.
Therefore, by Lemma \ref{inverse0} again,  $\MM_{1/\mu, \Psi} \MM_{\mu, \Psi}$ is invertible on $\Hil_{m \sqrt{\mu}}^p$. 
Consequently, $\MM_{\mu, \Psi}$ is one-to-one  on $\Hil_{m\sqrt{\mu}}^p$.

(v) We now replace $\mu $ by $1/\mu $ and repeat  the argument for
$\MM_{\mu, \Psi} \MM_{1/\mu, \Psi}$. We conclude the invertibility
of $\MM_{\mu, \Psi} \MM_{1/\mu, \Psi}$ and consequently  $\MM_{\mu, \Psi}$ is surjective  onto
$\Hil_{\frac{m}{\sqrt{\mu}}}^p$,  and the isomorphism
property of $\MM_{\mu, \Psi}$  is shown. 

The preceding arguments show that  $\MM_{1/\mu, \Psi} $ is
one-to-one and onto and thus an isomorphism as well. 
\end{proof}

\begin{rem.} \label{rem:cont1}
{\rm A version of Theorem~\ref{th:multrecip} can also be shown for  continuous frames
\cite{antoin2} and corresponding multipliers \cite{xxlbayasg11}. The
proof structure remains the same, the use of integral operators
instead of matrices requires some technical modifications.} 
\end{rem.}

\section{Gabor Multipliers and Toeplitz Operators} \label{sec:appl0}

In this section we 
make explicit what the abstract lifting  theorems says
for Gabor frames and Toeplitz operators on Fock space. In both
examples we look at a  class of structured frames, then  explain
the meaning of localization, describe the associated coorbit space, and finally formulate a concrete version of the lifting
theorem. By choosing particular inverse-closed matrix algebras from
the literature, one can formulate many new and interesting lifting
theorems for Gabor multipliers and Toeplitz operators. We leave this
exercise to the reader, as they are already contained in Theorem~\ref{th:multrecip}. 

\subsection{Gabor Multipliers} \label{sec:Gabmult1}
We consider Gabor frames since these  were one of the  motivations for the
introduction of  localized frames. Gabor frames are structured frames
that are generated by \tfs s of a single function.
As usual, 
 for $x, \omega ,t \in \rd$ and $\lambda = (x,\omega )\in \rdd $, 
the \emph{time-frequency shift} $\pi \left( \lambda \right)$ is
defined by $\pi (\lambda )f(t)  =  e^{2 \pi i
   \omega t} f(t-x)$.  
      
 A \emph{Gabor frame}  is a frame consisting of \tfs s   $\Psi = \left\{ \pi
   (\lambda) g : \lambda \in \Lambda\right\}$. Often $\Lambda $ is
 taken to be a lattice in $\rdd $, but for the lifting theorems we
 only need to assume that
 $\Lambda \subseteq \rdd $ is relatively separated.  

The frame multipliers corresponding to a Gabor frame are called  Gabor
multipliers and have already been studied in the context of \tfa
\cite{feinow1,benepfand07,babacofesc22}. Given a symbol $\mu$ on $\Lambda $ and a window
$g$, the
associated  multiplier is defined as
$$
m_\mu f = \sum \limits_{\lambda \in \Lambda} \mu_\lambda \, 
\ip{f}{\pi(\lambda) g}\, \pi(\lambda) g \, .
$$ 

To describe the localization of a Gabor frame operator we use the
Jaffard algebra of Example~\ref{exjaffard} and impose a condition of
polynomial off-diagonal decay on the Gram matrix.  Using the \stft ,
one can find condition such that the Gram matrix  $G$ belongs to the Jaffard
algebra $\cA _s$, i.e. for all $ \lambda  , \lambda ' \in \Lambda $
\begin{align} \label{eq:rev1}
  |G_{\lambda ,\lambda ' } | = |\langle \pi (\lambda ')g, \pi (\lambda
  )g \rangle |
  = | \langle g, \pi (\lambda -\lambda ') \rangle|  \leq C
  (1+|\lambda -\lambda ' |)^{-s} \,
  \, .
\end{align}
We have  already seen that the class of $t$-moderate weights, i.e.,
$m$ satisfies $m(\lambda +\lambda ' ) \leq C (1+|\lambda | )^t
m(\lambda ' )$ for all $\lambda , \lambda '\in \Lambda $, is $\cA _s$
admissible for $t<s-2d $ (note that $\Lambda \subseteq \rdd$).

The coorbit spaces associated to a Gabor frame can be identified with
the class of \modsp s and possess a more explicit
description. To describe these spaces,  we define the \emph{short-time Fourier
  transformation} of a function $f$ with respect to a non-zero   \emph{window}
function  $g\in \LtRd $ to be   
$$ V_g f(x,\omega) = \left< f, \pi(\lambda) g \right>_{\LtRd} = \int \limits_{\rd} f(t) 
      \overline{g(t - x)} e^{-2\pi i \omega t} \, dt \,.$$
Then the \modsp s are defined as follows. 

\begin{def.} Fix a non-zero window $g \in \cS$, a $t$-moderate weight
  function $m$ on $\rdd$ for some $t>0$ and $1\le p\le \infty$. Then the \emph{modulation space}  $\Mpm$  consists of all tempered distributions $f \in \mathcal S' (\rd)$ such that $V_g f \in \Lpm$. The norm on $\Mpm$ is
$$ \norm{\Mpm}{f} = \norm{\Lpm}{V_g f} $$
\end{def.}
It can be shown \cite{gr01} that the spaces defined above do not depend on the
special choice of the non-zero test function $g$, as long as it is
sufficiently well concentrated in the time frequency sense. Different
functions define the same space with equivalent norms. Moreover these
functions spaces are Banach spaces that are invariant under time-frequency
shifts  \cite{gr01}.
 Note that the Gram matrix $G $ of a Gabor frame is in the Jaffard algebra $ \cA
_s$ - see \eqref{eq:rev1} -  if and only if the window $g$ is in the modulation space $
M^\infty _{w_s}$ for the weight function $w_s(z)  = (1+|z|)^s$. Thus
in \tfa\  properties of the Gram matrix can be translated directly
into properties of the underlying window $g$. 

As a first application of localized frames  in~\cite{forngroech1},  the coorbit spaces with respect
to a Gabor frame $\{ \pi (\lambda )g: \lambda \in \Lambda \}$
was shown to be identical to a \modsp . Precisely,  $\Mpm =
\Hil_{\tilde{m}}^p$, where $\tilde{m}$ is the restriction of the
weight $m$ to $\Lambda $. 
So our lifting theorem leads to the following result in the context of
\tfa . 
\begin{theorem} \label{th:liftGab} 
Fix $s >2d$ and let $a>2s-2d$. Let $g \in M_{w_a}^{\infty}
(\rd)$ and  assume that  $\Lambda \subseteq \rd$ be a relatively separated set, such that $\left\{ \pi(\lambda) g \right\}$ is a Gabor frame. 
Assume that $\mu , m\sqrt{\mu }$, and $m/\sqrt{\mu }$ are
$t$-moderate weights for $t<s-2d$. 

Then,  for all $1\le p \le \infty$, the  Gabor multiplier
 $m_\mu$ is an isomorphism from $M_{m \sqrt{\mu}}^p(\rd)$
 onto $M_{\frac{m}{\sqrt{\mu}}}^p(\rd)$. 
\end{theorem}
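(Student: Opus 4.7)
The plan is to apply the abstract lifting theorem (Theorem~\ref{th:multrecip}) with the Jaffard algebra $\mathcal{A}_s$ playing the role of the ARI-algebra, and then translate the conclusion into the language of modulation spaces via the identification $\Hil^p_{\tilde{w}} = M^p_w(\rd)$ for Gabor coorbit spaces recalled just before the theorem. To invoke Theorem~\ref{th:multrecip}, three things must be checked: that $\mathcal{A}_s$ is an ARI-algebra with $\mu$, $m\sqrt{\mu}$, and $m/\sqrt{\mu}$ among its admissible weights; that the Gabor frame $\Psi = \{\pi(\lambda)g : \lambda \in \Lambda\}$ is $\mathcal{A}_s$-localized; and that the conjugated Gram matrix $G_\Psi^{\mu}$ also lies in $\mathcal{A}_s$.

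The first point follows directly from Example~\ref{exjaffard}: since $s > 2d$, $\mathcal{A}_s$ is an inverse-closed Banach $^*$-algebra whose admissible weights include every $t$-moderate weight with $t < s - 2d$, which by hypothesis covers all three weights in question. For the second, the Gram entries of the Gabor frame satisfy $|G_{\lambda,\lambda'}| = |V_g g(\lambda-\lambda')|$, and since $g \in M_{w_a}^\infty(\rd)$ with $a > 2s - 2d > s$, one obtains $|V_g g(z)| \le C(1+|z|)^{-a} \le C(1+|z|)^{-s}$; by \eqref{eq:rev1} this puts $G_\Psi$ in $\mathcal{A}_s$.

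The crucial step is verifying $G_\Psi^\mu \in \mathcal{A}_s$. From the definition,
$$|(G_\Psi^\mu)_{\lambda,\lambda'}| = \frac{\mu_\lambda}{\mu_{\lambda'}}\, |G_{\lambda,\lambda'}|,$$
and $t$-moderateness of $\mu$ gives $\mu_\lambda / \mu_{\lambda'} \le C(1+|\lambda-\lambda'|)^t$; combined with the window-decay bound this produces $|(G_\Psi^\mu)_{\lambda,\lambda'}| \le C(1+|\lambda-\lambda'|)^{t-a}$. The quantitative hypothesis $a > 2s - 2d$ together with $t < s - 2d$ forces $a - t > s$, so $G_\Psi^\mu$ lies in $\mathcal{A}_s$. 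This arithmetic calibration between the decay exponent $a$ of the window and the moderateness index $t$ of the weights is the main delicate point of the proof: the extra decay budget $a - s$ of $g$ must absorb the worst-case growth $s - 2d$ that a $t$-moderate weight can produce along the diagonal, and the hypothesis is tuned exactly so that this holds uniformly over the class of admissible weights.

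With all three hypotheses verified, Theorem~\ref{th:multrecip} yields that $\MM_{\mu,\Psi}$ is a Banach space isomorphism from $\Hil^p_{m\sqrt{\mu}}$ onto $\Hil^p_{m/\sqrt{\mu}}$ for every $1 \le p \le \infty$. Since the identification $\Hil^p_{\tilde{w}} = M^p_w(\rd)$ for Gabor coorbit spaces was already noted in the paper and since $\MM_{\mu,\Psi}$ is by definition the Gabor multiplier $m_\mu$, the claimed modulation-space isomorphism $m_\mu : M^p_{m\sqrt{\mu}}(\rd) \to M^p_{m/\sqrt{\mu}}(\rd)$ follows at once.
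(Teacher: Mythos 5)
Your proposal is correct and follows essentially the same route as the paper: the Jaffard algebra $\mathcal{A}_s$ with $t$-moderate admissible weights, the bound $|G_{\lambda,\lambda'}|\le C(1+|\lambda-\lambda'|)^{-a}$ from $g\in M^\infty_{w_a}$, the estimate $|(G^\mu)_{\lambda,\lambda'}|\le C(1+|\lambda-\lambda'|)^{t-a}$ with the arithmetic $a-t>s$, and the final appeal to Theorem~\ref{th:multrecip} together with the identification of Gabor coorbit spaces with modulation spaces. No gaps.
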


\begin{proof}
The underlying ARI-algebra is the Jaffard algebra $\cA _s$ discussed
in Example~\ref{exjaffard}.  As the  class of $\cA _s$-admissible
weights we take 
 all $t$-moderate weights with $t<s-2d$, whence comes the
assumption on the weights.

If $g\in M_{w_a}^{\infty} (\rd)$, then the Gram matrix of the frame
satisfies \cite{forngroech1}
$$
  |G_{\lambda ,\lambda ' } | 
  = |V_gg(\lambda -\lambda ' )| \leq C
  (1+|\lambda -\lambda ' |)^{-a} \leq C (1+|\lambda -\lambda ' |)^{-s}  \, \quad \forall \lambda  , \lambda ' \in \Lambda
  \, , 
$$
and consequently $G\in \cA _s$. In view of
Theorem~\ref{th:multrecip} we also need that $ G^{\mu} =
\diag{\mu}  \, \, G\, \, 
 \diag{\tfrac{1}{\mu}}   \,  
    \in \cA _s$. Note that $G^\mu $ has the entries $\frac{\mu
    _\lambda }{ \mu _{\lambda '}} G_{\lambda ,\lambda ' }$
    and that $\mu _\lambda \leq (1+|\lambda - \lambda '|)^t\,  \mu
    _{\lambda '}$ because $\mu $ is $t$-moderate. Therefore
    $$
    \frac{\mu     _\lambda }{\mu _{\lambda '}} \,  |G_{\lambda ,\lambda '
    }|\leq (1+|\lambda - \lambda '|)^{t-a} \qquad \text{ for all }
    \lambda , \lambda ' \in \Lambda \, .
    $$
    Since $a>2s-2d>s + t$  and thus $t-a<-s$ by assumption, we
    conclude that  $G^{\mu} \in \cA
    _s$ and $G^{1/\mu}\in \cA _s$. 

We have verified all hypothesis of Theorem~\ref{th:multrecip} and
conclude that the Gabor multiplier   $m_\mu $ maps  the
space $\Hil ^p_{m\sqrt{\mu } } = M^p_{m\sqrt{\mu } } (\rdd ) $ bijectively
to $\Hil ^p_{m/\sqrt{\mu } }= M^p_{m/\sqrt{\mu }
}$. 
\end{proof}

\textbf{Discussion of the literature.} It is not difficult to see and
was already proved by Feichtinger~\cite{fei83}  that for pure frequency  weights 
$m(x,\omega ) = (1+|\omega |)^s$,  the lifting operator is the
differentiation operator. 
A new generation of lifting theorems covers the class of general
    \modsp s $\Mmpq $ with a weight function $m$ depending on both
    time and frequency. 
    In this case, different
    \modsp s are related by \emph{\tf\ multipliers}, or, in a different
    terminology, by Toeplitz operators with respect to the \stft
    . These  operators are parametrized by a
    symbol $m$ depending on time \emph{and} frequency and are   much  more
    complicated  than Fourier  multipliers. Formally, they are defined
    by the vector-valued integral 
    \begin{equation}
      \label{eq:c9}
    A_\mu^g f = \intrdd \mu(z) V_gf(z) \, \pi (z) g \, dz  \, .  
    \end{equation}
    Generically,   the lifting theorems for
    general \modsp s assert that the \stft\ multiplier $A^g_\mu$ is an
    isomorphism from $M^{p,q}_{m \sqrt{\mu }} $ onto
      $M^{p,q}_{m/\sqrt{\mu }}$. The 
    proofs require a heavy dose of
    \tfa , \psdo\ theory, and abstract Banach algebra
    arguments. Various levels of generality of this lifting theorem
    are recorded ranging from  weights of
    polynomial growth to weights (sub)exponential growth ~\cite{ACT20,BToft05,grto10,grochenig2011isomorphism}

    Theorem~\ref{th:liftGab} adds a further facet to these lifting
    theorems in that the \emph{function} $\mu $ in \eqref{eq:c9} is replaced by a Radon
    measure $\mu = \sum _{\lambda \in \Lambda } \mu _\lambda \delta
    _\lambda $. Theorem~\ref{th:liftGab} could possibly be proved by
    adding more technicalities to the methods
    in~\cite{grto10,grochenig2011isomorphism}, whereas 
    in our set-up the result is  a direct consequence of the abstract
    lifting theorem with localized frames. Extending the result in the current paper to the continuous setting as mentioned in Rem \ref{rem:cont1} would allow to directly show the lifting theorem for the Toeplitz operator $A_\mu^g$ with the techniques used in here.

\subsection{Fock Spaces} 
Fock spaces are Banach spaces of entire functions satisfying certain
integrability conditions. Given a non-negative weight function $m$ on
$\bC $, the Fock space $\cF ^p_m$ consists of all entire functions
with finite  norm
$$
\|F\|_{\cF ^p_m} = \Big( \int _{\bC } |F(z)|^p m(z)^p e^{-\pi p
  |z|^2/2} \, dz \Big) ^{1/p} \, .
$$
For $p=2$ and $m\equiv 1$, $\cF ^2$ is a reproducing kernel Hilbert
space  with the normalized reproducing kernel \cite{folland89,zh12} 
$$\psi _\lambda (z) = e^{\pi \bar{\lambda } z} e^{-\pi |z|^2/2} \, ,$$
so that $f(\lambda) ^{-\pi |\lambda |^2/2}  = \langle f, \psi _\lambda \rangle _{\cF ^2}$. 
The frames of reproducing kernels in $\cF ^2$ are completely
characterized by their density. The Theorem of
Lyubarskii-Seip-Wallst\'en~\cite{lyub92,sewa92} asserts that \emph{\{$ \psi _\lambda
  :\lambda \in \Lambda \}$ is a frame for $\cF ^2$, \fif\  $\Lambda
  $ contains a relatively separated subset $\Lambda '\subseteq \Lambda
  $ with  lower
Beurling $D^-(\Lambda ' ) = \lim _{r\to \infty } \inf _{z\in \bC } \frac{\mathrm{card}\,
  \Lambda \cap B_r(z)}{\pi r^2} >1$}. 
Since
$$
|\langle \psi _{\lambda '}, \psi _\lambda  \rangle |= |e^{\pi
  \bar{\lambda '} \lambda } |  e^{-\pi (|\lambda |^2 + |\lambda '|^2)/2}
=  e^{-\pi |\lambda - \lambda'|^2/2}
$$
has Gaussian decay, the Gramian $G$ of a frame of reproducing
kernels $\{\psi _\lambda \}$ is contained in every known matrix
algebra defined by off-diagonal decay. We can therefore use arbitrary
moderate weights of subexponential decay, i.e., for some $C,\alpha >0$
and $0<\beta <1$, we have
\begin{equation}
  \label{eq:c11}
  m(\lambda +\lambda ') \leq C e^{\alpha
  |z|^\beta } m(\lambda ') \qquad \text{ for all }\lambda , \lambda ' \in \bC \, .\end{equation}

In this context a frame multiplier with multiplier sequence $\mu =
(\mu _\lambda )_{\lambda \in \Lambda }$  takes the form 

$$ M_\mu F (z) = \sum \limits_{\lambda \in \Lambda} \mu_\lambda \,
F(\lambda) e^{\pi \bar{\lambda } z} \,  e^{-\pi
  \left|\lambda\right|^2}\, . $$
As in the case of time-frequency multipliers we note that $M_\mu $ can
be read as a Toeplitz operator on Fock space whose symbol is a Radon
measure in place of a nice function.

For this example the abstract lifting theorem
(Theorem~\ref{th:multrecip}) takes the following incarnation. 

\begin{theorem}
  Assume that $\mu $ is moderate as in \eqref{eq:c11} and that
  $\Lambda \subseteq \bC $ is relatively separated such that 
  $D^-(\Lambda )>1$. Then then $M_\mu$ is an isomorphism from
  $\cF^p_{m \sqrt{\mu }}$ to $\cF^p_{\frac{m}{\sqrt{\mu}}}$.
\end{theorem}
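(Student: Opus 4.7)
The plan is to apply the abstract lifting theorem (Theorem~\ref{th:multrecip}) to the family of normalized reproducing kernels $\Psi = \{\psi_\lambda : \lambda \in \Lambda\}$, after choosing an ARI-algebra that is large enough to contain their Gram matrix and all the weighted variants needed. Since the off-diagonal entries of $G_\Psi$ decay like $e^{-\pi|\lambda-\lambda'|^2/2}$, any matrix algebra defined by a subexponential off-diagonal decay condition of the form $|a_{\lambda \lambda'}|\le C\,e^{-a|\lambda-\lambda'|^b}$ (with $0<b<1$) will do; the inverse-closedness of such algebras is classical, see for instance \cite{Bas90,grle06}, and their admissible weights include precisely the moderate weights satisfying \eqref{eq:c11}. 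I would fix such an algebra $\cA$ with exponent $b$ slightly larger than the exponent $\beta$ appearing in the moderateness of $\mu$, so that $\mu$, $m\sqrt{\mu}$ and $m/\sqrt{\mu}$ are all $\cA$-admissible.

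The first verification is that $\Psi$ is a frame, which is exactly the content of the Lyubarskii--Seip--Wallst\'en theorem under the density hypothesis $D^-(\Lambda)>1$. The second is that $G_\Psi \in \cA$: the explicit formula $|\langle \psi_{\lambda'},\psi_\lambda\rangle| = e^{-\pi|\lambda-\lambda'|^2/2}$ trivially dominates the required subexponential bound, uniformly in $\lambda,\lambda'$ since $\Lambda$ is relatively separated. For the conjugated matrix $G_\Psi^\mu$, the entries carry the extra factor $\mu_\lambda/\mu_{\lambda'}$, which is bounded by $Ce^{\alpha|\lambda-\lambda'|^\beta}$ by moderateness; since Gaussian decay absorbs any subexponential factor, $G_\Psi^\mu$ still satisfies a subexponential decay estimate and lies in $\cA$. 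The same argument handles $G_\Psi^{1/\mu}$.

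Next I would identify the coorbit spaces $\Hil^p_w(\Psi,\widetilde\Psi)$ with the weighted Fock spaces $\cF^p_w$ for any $\cA$-admissible weight $w$. This is the analogue of the identification $\hpm = \Mpm$ from the Gabor setting and follows from standard sampling/atomic decomposition results for Fock spaces: the map $F \mapsto (F(\lambda)e^{-\pi|\lambda|^2/2})_{\lambda \in \Lambda}$ provides a norm equivalence between $\cF^p_w$ and $\ell^p_w(\Lambda)$ when $\Lambda$ is a sampling set, and the operator $M_\mu$ is precisely the frame multiplier $\MM_{\mu,\Psi}$ under this identification (the explicit formula in the statement is just $D_\Psi \diag{\mu} C_\Psi$ evaluated on reproducing kernels).

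With all hypotheses of Theorem~\ref{th:multrecip} verified, the abstract lifting theorem gives that $\MM_{\mu,\Psi}$ is a Banach space isomorphism from $\Hil^p_{m\sqrt{\mu}} = \cF^p_{m\sqrt{\mu}}$ onto $\Hil^p_{m/\sqrt{\mu}} = \cF^p_{m/\sqrt{\mu}}$. I expect the main obstacle to be purely bookkeeping: choosing the decay exponent $b$ and the algebra so that the class of admissible weights genuinely contains every weight appearing in the statement, and citing the correct reference for the sampling equivalence $\cF^p_w \cong \ell^p_w(\Lambda)$ via evaluation on $\Lambda$.
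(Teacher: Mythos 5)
Your proposal is correct and follows essentially the same route the paper takes: it invokes the abstract lifting theorem (Theorem~\ref{th:multrecip}) with a subexponentially decaying inverse-closed matrix algebra, uses the Lyubarskii--Seip--Wallst\'en density theorem for the frame property, the Gaussian decay $|\langle \psi_{\lambda'},\psi_\lambda\rangle| = e^{-\pi|\lambda-\lambda'|^2/2}$ to place $G_\Psi$ and its $\mu$-conjugates in the algebra, and the sampling identification of the coorbit spaces with the weighted Fock spaces --- exactly the ingredients the paper assembles in Section~5.2, mirroring the proof of Theorem~\ref{th:liftGab}. No gaps beyond those the paper itself leaves implicit (the norm equivalence $\cF^p_w \cong \Hil^p_w(\Psi,\widetilde\Psi)$ via sampling), which you correctly flag.
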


This result seems to be new in
complex analysis. For multivariate versions one must assume that the
reproducing kernels $\{ \psi _\lambda \}$ form a frame, but there is
no characterization via the density.

\section*{Acknowledgment}
The authors thank Dominik Bayer, Gilles Chardon, Jose Luis Romero and Lukas K\"ohldorfer for interesting discussions.

\end{document}